\documentclass{article}

\usepackage{arxiv}

\usepackage[utf8]{inputenc} 
\usepackage[T1]{fontenc}    
\usepackage{hyperref}       
\usepackage{url}            
\usepackage{booktabs}       
\usepackage{amsfonts}       
\usepackage{nicefrac}       
\usepackage{microtype}      
\usepackage{graphicx}
\usepackage{doi}
\usepackage[
color=orange!80, 
bordercolor=black,
textwidth=3cm,
textsize=small,
colorinlistoftodos]
{todonotes}
\usepackage{comment}

\usepackage[
backend=biber,
style=alphabetic,
maxbibnames=99
]{biblatex}
\addbibresource{references.bib}

\usepackage{amsthm}
\usepackage{amsmath}
\usepackage{amsfonts}
\usepackage{ amssymb }
\usepackage{xfrac}
\usepackage{quiver}
\usepackage{float}
\usepackage{array}
\usepackage{scalerel}

\newcommand{\cat}[1]{\ensuremath{\mathbf{#1}}}

\newcommand{\Z}{\mathbb Z}
\newcommand{\R}{\mathbb R}

\newcommand{\Q}{\mathbb Q}
\newcommand{\T}{\mathbb T}
\newcommand{\F}{\mathbb F}

\newcommand{\Span}{{\operatorname{Span}}}

\newcommand{\gen}[1]{\left\langle#1\right\rangle}

\theoremstyle{plain}
\newtheorem{theorem}{Theorem}[section]
\newtheorem{corollary}[theorem]{Corollary}
\newtheorem{lemma}[theorem]{Lemma}
\newtheorem{proposition}[theorem]{Proposition}

\theoremstyle{definition}
\newtheorem{definition}[theorem]{Definition}

\newenvironment{conventions}{\par\noindent \textbf{Conventions:} \rmfamily}{\medskip}

\newenvironment{forwardtheorem}[2]{\par\noindent \textbf{#1 #2.} \rmfamily\itshape}{\medskip}

\newcounter{remark}
\newenvironment{remark}{\refstepcounter{theorem}\par
   \noindent \textbf{Remark~\thetheorem.} \rmfamily}{\medskip}

\newcounter{example}
\newenvironment{example}[1][]{\refstepcounter{theorem}\par
   \noindent \textbf{Example~\thetheorem. #1} \rmfamily}{\medskip}

\makeatletter
\def\renewtheorem#1{%
  \expandafter\let\csname#1\endcsname\relax
  \expandafter\let\csname c@#1\endcsname\relax
  \gdef\renewtheorem@envname{#1}
  \renewtheorem@secpar
}
\def\renewtheorem@secpar{\@ifnextchar[{\renewtheorem@numberedlike}{\renewtheorem@nonumberedlike}}
\def\renewtheorem@numberedlike[#1]#2{\newtheorem{\renewtheorem@envname}[#1]{#2}}
\def\renewtheorem@nonumberedlike#1{  
\def\renewtheorem@caption{#1}
\edef\renewtheorem@nowithin{\noexpand\newtheorem{\renewtheorem@envname}{\renewtheorem@caption}}
\renewtheorem@thirdpar
}
\def\renewtheorem@thirdpar{\@ifnextchar[{\renewtheorem@within}{\renewtheorem@nowithin}}
\def\renewtheorem@within[#1]{\renewtheorem@nowithin[#1]}
\makeatother

\makeatletter
\@ifpackageloaded{fdsymbol}\@tempswafalse\@tempswatrue
\if@tempswa
  \newcommand{\fdsy@scale}{1.0}
  \newcommand\fdsy@mweight@normal{Book}
  \newcommand\fdsy@mweight@small{Book}
  \newcommand\fdsy@bweight@normal{Medium}
  \newcommand\fdsy@bweight@small{Medium}
  \DeclareFontFamily{U}{FdSymbolA}{}
  \DeclareSymbolFont{fdsymbols}{U}{FdSymbolA}{m}{n}%
  \SetSymbolFont{symbols}{bold}{U}{FdSymbolA}{b}{n}%
  \DeclareFontShape{U}{FdSymbolA}{m}{n}{
      <-7.1> s * [\fdsy@scale] FdSymbolA-\fdsy@mweight@small
      <7.1-> s * [\fdsy@scale] FdSymbolA-\fdsy@mweight@normal
  }{}
  \DeclareFontShape{U}{FdSymbolA}{b}{n}{
      <-7.1> s * [\fdsy@scale] FdSymbolA-\fdsy@bweight@small
      <7.1-> s * [\fdsy@scale] FdSymbolA-\fdsy@bweight@normal
  }{}
  \DeclareMathSymbol{\aleph}{\mathord}{fdsymbols}{"C7}
  \DeclareMathSymbol{\beth}{\mathord}{fdsymbols}{"C8}
  \DeclareMathSymbol{\gimel}{\mathord}{fdsymbols}{"C9}
  \DeclareMathSymbol{\daleth}{\mathord}{fdsymbols}{"CA}
\fi
\makeatother

\title{A Universal Property of the Spectrum of a Ring and the Semiring of its Ideals}


\author{William Bernardoni\\
	Case Western Reserve University\\
	Cleveland, OH 44118 \\
	\texttt{wrb37@case.edu}
}




\hypersetup{
pdftitle={Universal Property of the Spectrum of a Ring and the Semiring of its Ideals},
pdfsubject={Universal Property},
pdfauthor={William Bernardoni},
pdfkeywords={Valuations (16W60), Non-Archimedean Valued Fields (12J25), Semirings (16Y60), Tropical Geometry (14T10)},
}


\begin{document}
\maketitle

\begin{abstract}
	Arithmetic valuations are intimately connected with the structure of the ideals of a commutative ring. We show how the generalized idempotent semiring valuations of Jeffrey and Noah Giansiracusa can be used to make this connection explicit. Through this generalized valuation theory sufficiently complete positive valuations give rise to Galois correspondences with the lattice of ideals of a commutative ring. Up to isomorphism the semiring of ideals of a commutative ring can be defined as a universal factoring semiring for positive valuations. We then further show that this valuation theory can formally connect Joyal's \textit{notion de z\'eros} and universal property of the spectrum of a ring to arithmetic valuation theory. We show that up to isomorphism as a coframe, the closed Zariski topology on a spectrum of a commutative ring can be defined as the universal factoring semiring of additively and multiplicatively idempotent valuations.
\end{abstract}

\section*{Disclosure Statement}
The author reports there are no relevant financial or non-financial competing interests in the publication of this research.


\section{Introduction}

Arithmetic valuations are a powerful tool connecting algebraic geometry, tropical geometry, and non-Archimedean analysis. They were an integral component of Zariski's conception of algebraic geometry, and are intimately connected with $p$-adic numbers and $p$-adic analysis. Equipping a ring with a valuation allows one to tropicalize, bringing in additional tools to study algebro-geometric objects over the ring, such as the method of patchworking created by Oleg Viro to classify isotropy classes of real algebraic curves of degree 7 \cite{viro2006patchworking, Mikhalkin}.

Saying that a ring admits a non-degenerate valuation is equivalent to placing a non-Archimedean absolute value on the ring. There are many functionals on algebras however that appear similar to standard valuations but are not \textit{exactly} non-Archimedean absolute values. This insight led to generalizations of arithmetic valuations.

One such generalized notion is due to Krull and such valuations are called \textit{Krull valuations}, wherein the codomain of the valuation was taken to be a totally ordered abelian semigroup rather than strictly the extended real numbers. Zariski used and extended this notion to form a basis for his algebraic geometry.

In their paper on a scheme theoretic tropicalization of toric varieties, \cite{2016}, Noah and Jeffrey Giansiracusa introduced a further generalization of a valuation, where the codomain was taken to be an idempotent semiring. This allows one to consider \textit{partially ordered} valuations and extend the idea of tropicalization to further semirings. This approach can also be used to define a \textit{non-commutative} valuation theory, which was partially explored in \cite{bernardoni2023applications}. This generalization allows one to connect the standard theory of arithmetic valuations with similar notions, such as Big-O asymptotic complexity in computer science, and assigning zero sets to polynomials.

The valuations on a ring are strongly connected with the underyling structures of the ring. Recent work by Noah and Jeffrey Giansiracusa have shown that in a scheme-theoretic setting their generalized notion of a valuation is intimately connected with the Berkovich analytification \cite{giansiracusa2022universal}. Classical results, such as Krull's classification of valuation rings \cite{Krull1939}, show that the existence of standard arithmetic valuations is intimately tied to the structure of the lattice of ideals of a given ring.

In this paper we will clarify the connection between the structure of ideals of a commutative ring and the valuations of that ring. We will also show how the generalized valuations that exist on a ring determine and are determined by its prime spectrum. To do so we explore two further specializations of the Giansiracusas' valuation theory and use them to derive defining universal properties for the semiring of ideals of a commutative ring as well as the coframe of the closed Zariski topology on its prime spectrum.

We will first examine positive valuations -- valuations whose codomain are idempotent semirings bounded by $0$ and $1$, and show that the semiring of ideals of a commutative Noetherian ring is the universal semiring that factors all such positive valuations. When we further restrict the codomain to complete idempotent semirings bounded by $0$ and $1$, also known as commutative integral quantales, and show that for any commutative ring its quantale of ideals is defined up to isomorphism as the universal quantale that factors such maps.

As a result we have in Theorem \ref{thm:GaloisConnection} that every sufficiently complete positive valuation induces a Galois connection on the ideals of a commutative ring and Corollary \ref{thm:I(R) initial} stating that the semiring of ideals may be defined up to isomorphism by examining the positive valuations of a commutative ring.

\begin{forwardtheorem}{Theorem}{\ref{thm:GaloisConnection}}
    Let $R$ be a commutative ring and $I(R)$ its semiring of ideals.
    
    Let $\nu : R \to \Gamma$ be a positive valuation, and let $\Gamma$ be a complete semiring. Then there exists a Galois connection between $I(R)$ and $\Gamma$.

    That is, there exist monotone maps $N : I(R) \to \Gamma$ and $I: \Gamma \to I(R)$ such that:
    \[\gamma \le N(J) \iff J \subseteq I(\gamma)\]
    For $\gamma \in \Gamma$, $J \in I(R)$
\end{forwardtheorem}

\begin{forwardtheorem}{Corollary}{\ref{thm:I(R) initial}}
    Let $R$ be a commutative ring.
    
    $I(R)$ is initial among positive valuations whose valuation semiring is complete for sums of size $|R|$ or less.

    If $R$ is Noetherian then $I(R)$ is initial among all positive valuations.
\end{forwardtheorem}

We will then restrict further to multiplicatively idempotent positive valuation -- valuations whose codomain are appropriately bounded semirings that are multiplicatively and additively idempotent. In the Noetherian case the Zariski closed topology on the prime spectrum is the universal semiring that factors such maps. If we restrict to the complete case, and study valuations whose codomain is a coframe (a complete, multiplicative and additively idempotent semiring), then for any commutative ring the Zariski closed topology is the universal coframe that factors all such valuations.

This will culminate in Theorem \ref{thm:SpecisUniversal}.

\begin{forwardtheorem}{Theorem}{\ref{thm:SpecisUniversal}}
    Let $R$ be a commutative ring.
    
    The initial valuation among appropriately complete, multiplicatively idempotent positive valuations is $\tau : R \to T$, where $T$ is the coframe of the closed Zariski topology on the spectrum of a commutative ring and $\tau(r)$ is the collection of prime ideals containing the element $r$.
\end{forwardtheorem}

One of the core results of this paper, that the spectrum of a ring is the ``universal zero locus", is a folkloric result. This result originated in the note by Andr\'e Joyal, \cite{Joyal}, where it was stated that the prime spectrum of a ring factors all \textit{notion de z\'eros} -- valuations whose image is in a distributive lattice. This result also appears in the book, \cite{StoneSpaces} in section V.3.1 where it is used as an equivalent definition of the Zariski spectrum of a ring. This result is mentioned and utilized in a Mathematics Stack Exchange answer by Zhen Lin Low, \cite{Zhen}. While this particular result may not be novel, there is novelty in our approach, and value in explicitly stating this result and proof in terms of valuations of commutative rings. This work fully connects Joyal's notion de z\'eros with classical arithmetic valuation theory.

While this work is focused on the valuation theory of commutative rings, there is recent work developing a valuation theory applied to idempotent semirings themselves. We can find similar results in this other algebraic context.  In the work \cite{tolliver2016extension}, valuations are taken to be any homomorphism between an idempotent semiring and a totally ordered one. The central result in \cite{bernardoni2023applications} can be seen as a ring-theoretic analog of a central result in \cite{tolliver2016extension}. In \cite{tolliver2016extension}, Jeffrey Tolliver finds that the universal valuation on an idempotent semiring is given by the collection of $\Z$-submodules, we see this reflected in the ring case in \cite{bernardoni2023applications} where the universal valuation on a ring is given by the collection of $\Z$-submodules of the ring. The work \cite{jun2021lattices} finds that the space of surjective valuations between idempotent semirings is isomorphic to the prime spectrum of that \textit{semi}ring.

\section*{Outline}
\begin{itemize}
    \item In Section \ref{sec:preliminaries} we introduce the algebraic definitions we will use. Section \ref{sec:semiring} gives definitions and examples of semirings, idempotent semirings, and complete idempotent semirings. A reader familiar with semirings may skip this section, noting that we use the order convention that $a \le b \iff a + b = a$. Section \ref{sec:Valuation} gives the generalized definition of an idempotent semiring valuation in the sense of \cite{2016}.
    \item In Section \ref{sec:UnivVal} we give the definition of a universal valuation (Definition \ref{UnivVal}), and repeat prior results from \cite{bernardoni2023applications} giving a classifying structural theorem (Theorem \ref{thm:valladdstruct}) to these universal valuation semirings. We give an explicit example (Example \ref{ex:SmallestNC}) of a non-trivial, non-commutative universal valuation semiring.
    \item In Section \ref{sec:PostiveVal} we introduce our notion of a \textit{positive} valuation (Definition \ref{def:PositiveVal}), we show that every positive valuation gives rise to a Galois connection with the lattice of ideals of a commutative ring (Theorem \ref{thm:GaloisConnection}) and we show that with minimal assumptions the universal \textit{positive} valuation semiring of a commutative ring $R$ is the semiring of ideals of that ring $I(R)$ (Corollary \ref{thm:I(R) initial}). We then show how one can recover many properties of the ideals of a ring purely from the semiring structure of $I(R)$.
    \item In Section \ref{sec:FrameVal} we restrict further to \textit{multiplicatively idempotent} positive valuations (Definition \ref{def:MultIdempVal}) -- e.g. coframe or set valued valuations -- and we show how the generalized notion of a valuation given by the Giansiracusas can be used to link Joyal's \textit{notion de z\'eros}, \cite{Joyal} and Johnstone's defining universal property of the Zariski spectrum (Section V.3.1 of \cite{StoneSpaces}) of a ring to classical arithemetic valuation theory. To do so we show that the universal multiplicatively idempotent positive valuation semiring of a commutative ring is given by the closed Zariski topology on its prime spectrum (Theorem \ref{thm:SpecisUniversal}).
\end{itemize}

\section*{Acknowledgments}
We would like to thank Nick Gurski for his mentorship, conversations and advice that helped shape this paper. We would also like to thank Zhen Lin Low for his correspondence and help in finding the references to Joyal and Johnstone's work. We would also like to thank Roland Baumann, Stephanie Egler, and Andrew Edwards for great mathematical conversations and feedback.

\section{Algebraic Preliminaries}
\label{sec:preliminaries}

This section is adapted from section 2 of \cite{bernardoni2023applications}.

\begin{conventions}
    \begin{itemize}
        \item In this paper we assume semirings, rings, and monoids are unital.
        \item We denote the powerset of a set $X$ as $2^X$.
        \item In equations that involve multiple algebraic objects, we denote by $+_S$ and $*_S$ to be the addition and multiplication respectively in the object $S$. In particular when we refer to standard arithmetic we will note $+_\R$ and $*_\R$ to refer to the standard addition and multiplication over the real numbers.
        \item In general we will use $R$ to designate an arbitrary ring, $S$ an arbitrary semiring, and $\Gamma$ an arbitrary idempotent semiring.
    \end{itemize}
\end{conventions}

\subsection{Semirings}\label{sec:semiring}
In this paper we will be concerned with valuations whose codomain are idempotent semirings.

\begin{definition}\label{def:semiring}
A \textbf{semiring} $(S, +, *, 0_S, 1_S)$ is a tuple where $S$ is a set, $+$ and $*$ are binary operations on $S$, and $0_S$ and $1_S$ are elements of $S$ such that:
\begin{enumerate}
    \item $(S,+)$ is a commutative monoid with identity element $0_S$.
    \item $(S,*)$ is a monoid with identity element $1_S$.
    \item Multiplication distributes over addition.
    \item Multiplication by $0_S$ annihilates $R$, that is $0_S * s  = 0_S = s * 0_S$ for all $s \in S$.
\end{enumerate}
\end{definition}

\begin{remark}
    A ring is a semiring whose additive monoid is a \textit{group} and not just a monoid. Semirings can be seen as ``rings without \textbf{n}egation", and so they are sometimes called \textbf{rigs}.
    
    The fact that multiplication by $0$ annihilates in a ring can be proven from the group structure of addition. In a semiring we cannot subtract, and so this must be taken as an axiom.
\end{remark}

\begin{definition}\label{def:idempotent}
A semiring $S$ is called \textbf{idempotent} if for all $a \in S$: $a + a = a$.
\end{definition}

The additive idempotency of a semiring is entirely determined by the idempotency of its multiplicative unit.

\begin{proposition}\label{thm:idemp1}
    A semiring $S$ is idempotent if and only if $1_S + 1_S = 1_S$.
\end{proposition}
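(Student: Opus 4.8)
The plan is to prove the two implications separately, and the whole argument is a short computation using distributivity. The forward direction is immediate: if $S$ is idempotent then by Definition \ref{def:idempotent} we have $a + a = a$ for every $a \in S$, so in particular, taking $a = 1_S$, we get $1_S + 1_S = 1_S$.

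For the converse, assume $1_S + 1_S = 1_S$ and let $a \in S$ be arbitrary. I would rewrite $a$ using that $1_S$ is the multiplicative identity and then pull out the common factor via (left) distributivity of multiplication over addition:
\[
a + a = 1_S * a + 1_S * a = (1_S + 1_S) * a = 1_S * a = a.
\]
Since $a$ was arbitrary, $S$ is idempotent.

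I do not anticipate any real obstacle here; the statement is essentially a one-line consequence of the distributive law. The only point worth flagging is that one should factor on a consistent side — above I factored on the left — since Definition \ref{def:semiring} only requires $(S,*)$ to be a monoid and not a commutative one; both one-sided distributive laws are part of axiom (3), so the symmetric computation $a + a = a * 1_S + a * 1_S = a * (1_S + 1_S) = a * 1_S = a$ works equally well. This remark matters because later sections also treat the noncommutative setting.
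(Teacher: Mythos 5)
Your proof is correct and is the standard one-line argument via distributivity; the paper in fact omits a proof for this proposition, treating it as routine, and what you wrote is exactly the argument a reader would be expected to supply. Your side remark about factoring consistently on one side (since $(S,*)$ need not be commutative) is a good observation and does no harm.
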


\begin{definition}
    We refer to the category of idempotent semirings as $\cat{ISR}$.

    An object in $\cat{ISR}$ is an idempotent semiring, and a morphism $\phi : S \to {S'}$ is a unital semiring homomorphism. That is for all $a, b \in S$:
    \begin{itemize}
        \item $\phi(a +_S b) = \phi(a) +_{S'} \phi(b),$
        \item $\phi(a *_S b) = \phi(a) *_{S'} \phi(b),$
        \item $\phi(0_S) = 0_{S'},$
        \item $\phi(1_S) = 1_{S'}.$
    \end{itemize}
\end{definition}

\begin{definition}
A semiring $S$ is called \textbf{commutative} if for all $a, b \in S$: $ab = ba$.
\end{definition}

We will chiefly be concerned with commutative idempotent semirings in this paper.

\begin{example}
    The most famous example of a commutative idempotent semiring is the \textbf{tropical semiring} 
    \[\T = (\R \cup \{\infty\}, \min, +_\R, \infty, 0).\]
    The tropical numbers are the extended real numbers with addition being the minimum of two numbers and multiplication being standard addition.
    \[a +_\T b = \min(a,b),\]
    \[a *_\T b = a +_\R b.\]
    The tropical semiring is an important structure in computer science as well as in algebraic geometry. Some good introductions to the applications of the tropical semiring and its geometry are the books \cite{MaclaganSturmfels, Mikhalkin, MaxPlus}.
\end{example}

\begin{remark}
    There are three equivalent formulations of the tropical semiring. The one we use in this paper is the ($\min$,+) tropical semiring as it is the formulation that appears in the standard definition of an arithmetic valuation.

    The other two forms are the $(\max, +)$ tropical semiring: $(\R \cup \{-\infty\},\max, +_{\R}, -\infty, 0)$ and the $(\max, \times)$ tropical semiring: $([0,\infty), \max, \times_{\R}, 0, 1)$.

    The $(\max, +)$ and the $(\min, +)$ semirings are isomorphic via the map
    \[x \mapsto -x.\]

    The $(\max, +)$ and the $(\max, \times)$ semiring are isomorphic via the map
    \[x \mapsto e^{x}.\]
\end{remark}

\begin{example}
    Another core example of a commutative idempotent semiring is the \textbf{boolean semiring}
    \[\mathbb B = (\{\bot, \top\}, \vee, \wedge, \bot, \top)\]
    where $\bot$ represents the logical false, $\top$ the logical true, $\vee$ is logical OR and $\wedge$ is logical AND.
\end{example}

\begin{remark}
    The boolean semiring is the unique idempotent semiring of order $2$ and is the initial object in $\cat{ISR}$.
\end{remark}

\begin{remark}
    All semirings can be seen as $\mathbb N$ algebras. Rings are exactly the semirings that have a $\mathbb Z$ algebra structure, and idempotent semirings are exactly the semirings that admit a $\mathbb B$ algebra structure.
\end{remark}

\begin{definition}\label{def:SemiringOrder}
    Idempotent semirings carry a natural partial order, where we define
\[a \le b \iff a + b = a.\]
    We say $a < b$ if $a \le b$ and $a \ne b$.
\end{definition}
This is in analogy with the ordering on the tropical semiring, $\T = (\R \cup \{\infty\}, \min, +, \infty, 0)$. A useful trick to remember this convention is that ``the baker eats the bigger baguette," as, if $b \ge a$, then $b$ ``is eaten" in the sum. Some authors prefer the flipped convention where $a \ge b$ if and only if $a + b = a$.

\begin{example}
    In $\mathbb B$ we have that $\top < \bot$.

    In $\T = (\R \cup \{\infty\}, \min, +_\R, \infty, 0)$, $a < b$ if $a$ is less than $b$ as elements of $\R$.

    If we instead use the $(\max, +)$ semiring $(\R \cup \{-\infty\}, \max, +_\R, -\infty, 0)$, $a < b$ in our semiring if $a$ is greater than $b$ as elements of $\R$.
\end{example}

Because this ordering is defined algebraically, we have the useful fact that idempotent semiring homomorphisms are monotone with respect to this ordering.

\begin{proposition}
    Semiring homomorphisms between idempotent semirings are order preserving.
\end{proposition}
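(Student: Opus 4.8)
The plan is to unwind the algebraic definition of the order (Definition \ref{def:SemiringOrder}) and push it through the homomorphism axioms. Concretely, I would let $\phi : S \to S'$ be a morphism in $\cat{ISR}$ and suppose $a, b \in S$ with $a \le b$; by definition this means precisely $a + b = a$ in $S$. The goal is to show $\phi(a) \le \phi(b)$ in $S'$, i.e. $\phi(a) + \phi(b) = \phi(a)$.

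First I would apply $\phi$ to the equation $a + b = a$, obtaining $\phi(a + b) = \phi(a)$. Then I would use the fact that $\phi$ preserves addition to rewrite the left-hand side as $\phi(a) + \phi(b)$, so that $\phi(a) + \phi(b) = \phi(a)$. By Definition \ref{def:SemiringOrder} applied in $S'$, this is exactly the statement $\phi(a) \le \phi(b)$, which completes the argument. (One could add the remark that the same computation shows $a < b$ need not be preserved strictly, since $\phi$ need not be injective, but monotonicity in the non-strict sense is all that is claimed.)

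There is essentially no obstacle here: the statement is immediate once the order relation is recognized as an equational condition on the additive structure, and homomorphisms by definition respect that structure. The only thing worth being careful about is bookkeeping of which semiring each sum lives in — the hypothesis is an equation in $S$ and the conclusion is an equation in $S'$ — but the homomorphism property $\phi(a +_S b) = \phi(a) +_{S'} \phi(b)$ bridges the two directly. No use of multiplicativity, unitality, or idempotency of $\phi$'s source or target is needed beyond the fact that the order is defined at all (which presupposes idempotency).
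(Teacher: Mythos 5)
Your proof is correct and is essentially identical to the one in the paper: both unwind the definition $a \le b \iff a + b = a$, apply $\phi$, and use additivity to conclude $\phi(a) + \phi(b) = \phi(a)$. No further comment is needed.
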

\begin{proof}
    Let $\phi: S \to S'$ be a semiring homomorphism, and let $a \le b$, i.e. $a + b = a$.
    
    \[\phi(a) + \phi(b) = \phi(a + b) = \phi(a)\]
    i.e. $\phi(a) \le \phi(b)$.
\end{proof}

\begin{definition}
    Let $X \subseteq S$ be a finite subset of an idempotent semiring $S$. We define:
    \[\inf(X) = \sum_{x \in X} x\]

    In particular, for $a, b \in S$, we define
    \[\inf(a,b) = a + b\]
\end{definition}
\begin{remark}
    It is equivalent to define an idempotent semiring as a unital monoid $(S,*, 1_S)$ equipped with a lower semi-lattice order and a maximal element $0_S$. The above definition gives this correspondence.
\end{remark}

\begin{definition}
    If the natural partial order on an idempotent semiring $S$ is a total order then we say that $S$ is a \textbf{totally ordered} idempotent semiring.
\end{definition}

Most idempotent semirings are not totally ordered. For instance, in general the power set of a monoid carries a non-totally ordered semiring structure:
\begin{example}\label{ex:powerset}
    Let $(M, *)$ be a monoid, not necessarily commutative, with identity $e$. We can construct the \textbf{power set} idempotent semiring of $M$ as the set of subsets of $M$, $2^M$, with addition being union and multiplication being Minkowski multiplication. That is:
    \[A + B = A \cup B\]
    \[A * B = \{ab : a \in A, b \in B\}\]
    The additive identity of $2^M$ is the empty set, $\emptyset$, and the multiplicative identity is the singleton, $\{e\}$.

    $A \le B$ if and only if $A \supseteq B$.\footnote{When dealing with idempotent semirings constructed from families of sets with $+ = \cup$ we will often have to deal with this reversal of order. If we chose the opposite order convention then our partial order would appear flipped when dealing with standard valuations. The ordering is useful shorthand and a useful property, but any choice of order convention will make working with some semirings awkward.}
\end{example}

\begin{proposition}
    $2^M$ is totally ordered if and only if $M = \{e\}$.
\end{proposition}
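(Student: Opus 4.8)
The statement reduces to an elementary fact about the poset of subsets of $M$ under reverse inclusion, since by Example~\ref{ex:powerset} we have $A \le B$ in $2^M$ precisely when $A \supseteq B$. Thus $2^M$ is totally ordered if and only if any two subsets of $M$ are comparable under inclusion, i.e. for all $A, B \subseteq M$ either $A \subseteq B$ or $B \subseteq A$. The plan is to prove the two implications separately, the forward one by contraposition.

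First I would treat the easy direction: if $M = \{e\}$, then $2^M = \{\emptyset, \{e\}\}$ has exactly two elements, and since $\{e\} \supseteq \emptyset$ the two elements are comparable; hence the natural order is total. For the converse I would argue contrapositively: suppose $M \ne \{e\}$. Since $M$ is unital it contains $e$, so $M \ne \{e\}$ forces the existence of some $m \in M$ with $m \ne e$. Then the singletons $\{e\}$ and $\{m\}$ are distinct one-element subsets of $M$, so neither is contained in the other; equivalently, in $2^M$ neither $\{e\} \le \{m\}$ nor $\{m\} \le \{e\}$ holds. Therefore $2^M$ is not totally ordered, completing the contrapositive.

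There is essentially no obstacle here — the only thing to be careful about is keeping the order-reversal convention of Example~\ref{ex:powerset} straight, and invoking unitality of $M$ so that ``$M \ne \{e\}$'' genuinely produces an element distinct from the identity (rather than, say, an empty monoid, which is excluded). The argument also makes transparent why the order convention is harmless for the conclusion: comparability of $A$ and $B$ is a symmetric condition, so it does not matter whether one reads $\le$ as $\supseteq$ or as $\subseteq$.
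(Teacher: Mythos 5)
Your argument is correct, and the paper in fact leaves this proposition without a written proof (it is stated as an observation immediately after Example~\ref{ex:powerset}). Your contrapositive argument — picking any $m \ne e$ and observing that the singletons $\{e\}$ and $\{m\}$ are incomparable under inclusion, hence under the semiring order — is the natural argument the author evidently had in mind, and your handling of the reverse-inclusion convention and the unitality point (so that $M \ne \{e\}$ really does yield an extra element) is exactly right.
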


\begin{definition}
    A semiring $S$ is \textbf{complete} if it has infinitary addition and infinitary distribution. i.e. for any, potentially infinite, index set $I$ there is a summation $\sum_{I}$ such that
    \begin{align*}
        a * \left(\sum_{i\in I} b_i\right) &= \sum_{i \in I} a * b_i\\
        \left(\sum_{i \in I} b_i\right) * a &= \sum_{i \in I} b_i * a.
    \end{align*}

    We require these operations to behave as one would expect with regard to decompositions of the index sets.
    \[\sum_{\emptyset}b_i = 0_S \qquad \sum_{\{1\}} b_i = b_1 \qquad \sum_{\{1,2\}} b_i = b_1 + b_2\]
    If $\bigcup I_j = I$ and the $I_j$ are pairwise disjoint then
    \[\sum_{j \in J}\sum_{i \in I_j} b_i = \sum_{i \in I} b_i.\]

    The above requirements can be stated concisely as: \textit{$S$ is a complete semiring if and only if $(S,+)$ is a complete monoid.}

    Let $\beth$ be a cardinal. $S$ is \textbf{complete for sums of size $\beth$ or less} or \textbf{$\beth$-complete} if it has summation operations that satisfies the corresponding distributive and decomposition laws for any index set $I$ such that $|I| \le \beth$.

    A \textbf{homomorphism of complete semirings} is a semiring homomorphism that respects these summation operations, that is:
    \[\phi(\sum_{I} b_i) = \sum_{I} \phi(b_i)\]
\end{definition}

The power set semiring is a canonical example of a complete idempotent semiring. The boolean and tropical semirings are also complete. All semirings are complete for finite sums, and all idempotent semirings may be extended to be complete (see \cite{goldstern2002completion}).

With respect to the semiring ordering, a complete semiring is not necessarily a complete lattice. We are only requiring it to have and respect potentially infinite infinums, but it does not need to have or respect potentially infinite supremums. In particular even if such supremums exist they may not be preserved by complete semiring homomorphisms.

The complete subsemirings of the lattice of the power set of a set have an important name -- \textit{topologies}.

\begin{example}
    Let $Z$ be a set.
    
    Let $\tau \subseteq 2^Z$ be an \textit{open}-topology. For any collection $X \subseteq \tau$, $\bigcup_{x \in X} x \in \tau$ and for any \textit{finite} collection $\bigcap_{i =1}^n x_i \in \tau$, as well as $\emptyset, Z \in \tau$.

    The topology $\tau$ is a subobject of the complete semiring $2^R$, where for $A, B \in \tau$
    \begin{align*}
        A +_{\tau} B &= A \cup B,\\
        A *_{\tau} B &= A \cap B,\\
        0_{\tau} &= \emptyset,\\
        1_{\tau} &= Z.
    \end{align*}

    For a \textit{closed}-topology $\tau'$ (i.e. closed under infinite intersection and finite unions) we flip these operations with
    \begin{align*}
        A +_{\tau'} B &= A \cap B,\\
        A *_{\tau'} B &= A \cup B,\\
        0_{\tau'} &= Z,\\
        1_{\tau'} &= \emptyset.
    \end{align*}

    In both cases we have a complete semiring. By definition topologies are exactly the complete subsemirings of $2^Z$, closed topologies with respect to the ordering $A \le B \iff A \subseteq B$ and open topologies with respect to the dual ordering $A \le B \iff A \supseteq B$.
\end{example}

\begin{remark}
    Due to this fact complete, commutative, multiplicatively and additively idempotent semirings appear often in topology under the name of \textbf{frames} and \textbf{coframes}. A frame is a generalization of the lattice of open sets of a topology and a coframe is a generalization of the lattice of closed sets of a topology. Frames are the basic structure of the field of point-free topology and a powerful generalization of topological spaces. An introduction to point-free topology and the applications of frames is the book \cite{StoneSpaces}.
\end{remark}

\begin{definition}
    A \textbf{congruence} $C$ of a semiring $S$ is a subsemiring of $S \times S$ that is also an equivalence relation.
    \begin{itemize}
        \item For all $a \in S$, $(a,a) \in C$.
        \item If $(a,b), (b,c) \in C$ then $(a,c) \in C$.
        \item If $(a,b), (c,d) \in C$ then $(b,a), (a+c,b+d), (ac,bd) \in C$.
    \end{itemize}
\end{definition}

Congruences are closed under arbitrary intersection, which gives us the following definition:
\begin{definition}
    Given a set of relations $X \subseteq S \times S$, we say that the congruence generated by $X$, $\gen{X}$, is the intersection of all congruences that contain $X$.
\end{definition}

If our semiring $S$ is a ring then the set of congruences is in bijection with the set of ideals, as each congruence is determined by the equivalence class of $0$.

Congruences are exactly the equivalence relations on semirings such that the equivalence classes form a well defined semiring under the induced operations.
\begin{definition}
    Given a semiring $S$ and a congruence $C$, we say the \textbf{quotient} $S/C$ is the semiring on the equivalence classes of $C$ equipped with the operations induced from $S$.
\end{definition}

A more detailed treatment of congruences and quotient semirings can be found in chapter 8 of \cite{golan1999semirings}.

It is important to note that for general semirings the quotients of a semiring $S$ are not determined by the equivalence class of $0$. There are often many nontrivial quotients of a semiring such that the equivalence class of $0$ is just $\{0\}$.

\begin{definition}
    Given a semiring $S$, and a set of variables $X$, we say that the \textbf{semiring of expressions over $X$}, denoted $S\gen{X}$, and also called the \textbf{non-commutative polynomial semiring}, is the freely generated semiring over elements in $S$ and variables in $X$ with no relations between them apart from defining $1_S = 1_{S \gen{X}}$ and $0_S = 0_{S \gen{X}}$.

    So for instance we have elements in $\T\gen{x,y}$ of the form:
    \[3xy + 12x + x12 + 0\]
    where $12x \ne x12$ and $xy \ne yx$.
\end{definition}

\begin{definition}
    The \textbf{polynomial semiring} over variables $X$ is the quotient
    \[S\gen{X}/\sim\]
    where $\sim$ is the congruence generated by relations specifying that variables from $X$ commute with one another and with elements in $S$. These relations are
    \begin{itemize}
        \item $xy \sim yx$ for $x, y \in X$,
        \item $sx \sim xs$ for $s \in S, x \in X$.
    \end{itemize}

    We denote this semiring $S[X]$.
\end{definition}

$S\gen{X}$ is the semiring of algebraic expressions where we do not assume that our variables commute with $S$ or with each other, whereas $S[X]$ is the semiring where we assume that our variables \textit{do} commute with $S$ and with each other.

\begin{proposition}
    Let $\Gamma$ be an idempotent semiring, $X$ any set, and $C$ any congruence on $\Gamma$. The following are all idempotent semirings.
    \begin{itemize}
        \item $\Gamma \gen {X}$
        \item $\Gamma[X]$
        \item $C$
        \item $\Gamma/C$
    \end{itemize}
\end{proposition}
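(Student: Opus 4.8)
The plan is to reduce every case to the single criterion of Proposition~\ref{thm:idemp1}: a semiring $S$ is idempotent precisely when $1_S + 1_S = 1_S$. So in each of the four constructions it suffices to locate the multiplicative unit and check that it absorbs itself under addition; the remaining semiring axioms for these constructions are standard (and recalled in the references cited earlier in this section).

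First I would handle $\Gamma\gen{X}$. By construction its unit is $1_{\Gamma\gen{X}} = 1_\Gamma$ and the canonical map $\Gamma \to \Gamma\gen{X}$ is a semiring homomorphism which does not impose any relations among elements of $\Gamma$ beyond those already holding there; hence the identity $1_\Gamma + 1_\Gamma = 1_\Gamma$, valid in $\Gamma$ because $\Gamma$ is idempotent, persists in $\Gamma\gen{X}$, and Proposition~\ref{thm:idemp1} applies. The case $\Gamma[X]$ is then immediate: it is the quotient $\Gamma\gen{X}/\!\sim$, and if $q : \Gamma\gen{X} \to \Gamma[X]$ is the quotient map then $1_{\Gamma[X]} + 1_{\Gamma[X]} = q(1_{\Gamma\gen{X}} + 1_{\Gamma\gen{X}}) = q(1_{\Gamma\gen{X}}) = 1_{\Gamma[X]}$, so $\Gamma[X]$ is idempotent. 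The same one-line argument applied to the quotient map $\Gamma \to \Gamma/C$ shows $\Gamma/C$ is idempotent, via $[1_\Gamma] + [1_\Gamma] = [1_\Gamma + 1_\Gamma] = [1_\Gamma]$. In brief: quotients of idempotent semirings are idempotent, because additive idempotency of the unit is preserved by homomorphisms.

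For $C$ I would first observe that the product semiring $\Gamma \times \Gamma$ is idempotent, since $(1_\Gamma,1_\Gamma) + (1_\Gamma,1_\Gamma) = (1_\Gamma + 1_\Gamma,\, 1_\Gamma + 1_\Gamma) = (1_\Gamma,1_\Gamma)$, again invoking Proposition~\ref{thm:idemp1}. A congruence $C \subseteq \Gamma \times \Gamma$ is by definition a subsemiring, and reflexivity forces $(1_\Gamma,1_\Gamma) \in C$, so $C$ is a unital subsemiring of $\Gamma \times \Gamma$ sharing its unit; it therefore inherits $1 + 1 = 1$ and is idempotent.

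I do not anticipate a genuine obstacle here; the only points that warrant a moment's care are that the ``free'' construction $\Gamma\gen{X}$ does not collapse the additive structure of $\Gamma$ (so that $1_\Gamma + 1_\Gamma = 1_\Gamma$ still holds in it), and that $C$, regarded as a semiring in its own right, really does have $(1_\Gamma,1_\Gamma)$ as multiplicative identity — both of which are built into the definitions recalled above.
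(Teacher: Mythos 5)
Your proof is correct and uses the same core idea as the paper: reduce everything to Proposition~\ref{thm:idemp1} by checking $1+1=1$ in each construction. The paper states this in a single sentence; you supply the routine verifications (homomorphic images in the three quotient/free cases, and the unital-subsemiring-of-a-product observation for $C$), but the route is identical.
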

This follows from Proposition \ref{thm:idemp1}, as $1_\Gamma + 1_\Gamma = 1_S$ for each of these semirings.

A detailed reference on the general theory of semirings is the book \cite{golan1999semirings}.

\subsection{Valuations}\label{sec:Valuation}
\begin{definition}\label{def:valuation}
Let $R$ be a ring and $\Gamma$ an idempotent semiring. We say that a function $\nu : R \to \Gamma$ is a \textbf{valuation} if $\nu$ is:
\begin{description}
    \item[Unital:] $\nu(0_R) = 0_\Gamma$, $\nu(1_R) = 1_\Gamma$,
    \item[Multiplicative:] $\nu(a*_Rb) = \nu(a)*_{\Gamma}\nu(b)$,
    \item[Superadditive:] $\nu(a +_R b) \ge \nu(a) +_{\Gamma} \nu(b) = \inf_{\Gamma}(\nu(a), \nu(b))$.
    \item[Invariant under negation:] $\nu(-1_R) = \nu(1_R)$
\end{description}
\end{definition}

From Definition \ref{def:SemiringOrder} we get that superadditivity is equivalent to the following identity:
\[\nu(a +_R b) +_\Gamma \nu(a) +_{\Gamma} \nu(b) = \nu(a) +_{\Gamma} \nu(b). \]

The above definition aligns with the generalized semiring valuation in \cite{2016}.

\begin{remark}
    You can recover the traditional definition of a valuation in arithmetic number theory by taking $\Gamma = \mathbb T$. A Krull valuation is a valuation where that the canonical order on $\Gamma$ is a total order and $\Gamma$ is commutative.
\end{remark}

\begin{example}
Let $\Gamma = (\Z \cup \{\infty\}, \min, +_\Z, \infty, 0)$.

Fix a prime number $p$. Each $x \in \Q$ can be written in the form $p^n * \frac{a}{b}$ for integers $a, b$ and $n$, where $p$ does not divide $a$ or $b$. The \textbf{$p$-adic valuation on $\mathbb Q$} is the map:
\[\nu_p(x) = n\]
$\nu_p: \Q \to \Gamma$ forms a valuation.
\end{example}

The following lemma also appears in \cite{2016} as Lemma 2.5.3:
\begin{lemma}\label{thm:meetofsum}
Let $R$ be a ring, and let $\nu: R \to \Gamma$ be a valuation on $R$.

For any $a, b \in R$ the following three values are equal: 
\begin{itemize}
    \item $\nu(a) + \nu(b)$
    \item $\nu(a+b) + \nu(a)$
    \item $\nu(a+b) + \nu(b)$
\end{itemize}
\end{lemma}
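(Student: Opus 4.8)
The plan is to use the key relation $\nu(a+b) \ge \nu(a) + \nu(b)$ together with the ring structure, exploiting that $a = (a+b) + (-b)$ and the negation-invariance axiom. The only subtlety is that $\nu$ is not a homomorphism — it is superadditive, not additive — so I cannot simply manipulate $\nu(a+b)$ freely. Instead I must carefully combine two superadditivity inequalities that point in opposite directions once I absorb them into a common sum.

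First I would record the consequences of superadditivity in the two relevant places. Applying the axiom to $a+b$ directly gives
\[\nu(a+b) + \nu(a) + \nu(b) = \nu(a) + \nu(b),\]
which already shows $\nu(a) + \nu(b) \le \nu(a+b)$ in the semiring order, and in particular that $\nu(a)+\nu(b)$ equals each of the three candidate values \emph{after} adding $\nu(a)+\nu(b)$ to them. Next, I would write $a = (a+b) +_R (-b)$ and apply superadditivity again:
\[\nu(a) \ge \nu(a+b) + \nu(-b) = \nu(a+b) + \nu(-1_R)\nu(b) = \nu(a+b) + \nu(b),\]
where the middle step uses multiplicativity and the last uses $\nu(-1_R) = \nu(1_R) = 1_\Gamma$. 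Unwinding the order relation, this says
\[\nu(a) + \nu(a+b) + \nu(b) = \nu(a).\]
Symmetrically, writing $b = (a+b) +_R (-a)$ gives $\nu(b) + \nu(a+b) + \nu(a) = \nu(b)$.

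To finish, I would chain these identities. Starting from $\nu(a+b) + \nu(a)$, I can add $\nu(b)$ harmlessly on the right \emph{using the identity just derived in reverse}: since $\nu(a) = \nu(a) + \nu(a+b) + \nu(b)$, the sum $\nu(a+b)+\nu(a)$ already "contains" $\nu(b)$ as a summand, so $\nu(a+b)+\nu(a) = \nu(a+b) + \nu(a) + \nu(b)$; but by the first displayed identity $\nu(a+b)+\nu(a)+\nu(b) = \nu(a)+\nu(b)$. Hence $\nu(a+b)+\nu(a) = \nu(a)+\nu(b)$. The same argument with the roles of $a$ and $b$ swapped gives $\nu(a+b)+\nu(b) = \nu(a)+\nu(b)$, and all three values coincide.

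The main obstacle — really the only place care is needed — is the bookkeeping in the last paragraph: one must be disciplined about which "$\le$" is being used and resist the temptation to treat $\nu$ as additive. Everything else is a direct application of the four valuation axioms, and there are no convergence or cardinality issues since only finite sums appear.
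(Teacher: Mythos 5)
Your proof is correct. The paper does not actually present a proof of this lemma---it simply states that the result appears as Lemma~2.5.3 in \cite{2016} and moves on---so there is nothing to compare against directly. That said, your argument is the canonical one: apply superadditivity to the decompositions $a = (a+b) + (-b)$ and $b = (a+b) + (-a)$, use multiplicativity together with $\nu(-1_R) = 1_\Gamma$ to get $\nu(-b) = \nu(b)$, and then chain the resulting idempotent-semiring identities. Each step is justified by the axioms in Definition~\ref{def:valuation} together with commutativity and idempotency of $+$ in $\Gamma$, and the bookkeeping in the final paragraph (absorbing $\nu(b)$ into $\nu(a+b)+\nu(a)$ and then collapsing via the superadditivity identity) is carried out without error.
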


\begin{remark}
    The above theorem is not just a consequence of superadditivity, but is in fact equivalent to superadditivity. 
    
    This theorem is the generalized version of a common theorem on arithmetic and Krull valuations. If $\nu$ is a Krull valuation and $\nu(a) \ne \nu(b)$ then $\nu(a + b) = \min(\nu(a), \nu(b))$. In the case where $\Gamma$ is totally ordered this is a corollary of the above theorem, however for a non-totally ordered codomain the above statement does not imply that $\nu(a + b) = \min(\nu(a),\nu(n))$ instead we just have that the infinum of any two elements of $\{\nu(a), \nu(b), \nu(a + b)\}$ are the same. 
\end{remark}

\begin{corollary}
    Let $a$ and $b$ be rational numbers with $a \ge b$.
    \[\operatorname{gcd}(a,b) = \operatorname{gcd}(a - b, b)\]
\end{corollary}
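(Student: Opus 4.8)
The plan is to reduce the identity to the family of $p$-adic valuations $\nu_p \colon \Q \to \Gamma$, where $\Gamma = (\Z \cup \{\infty\}, \min, +_\Z, \infty, 0)$, and then quote Lemma \ref{thm:meetofsum}. Recall that the greatest common divisor of two nonzero rationals is determined prime-by-prime: writing each nonzero $q \in \Q$ uniquely as $\pm\prod_p p^{\nu_p(q)}$ with $\nu_p(q) \in \Z$ almost all zero, one defines $\operatorname{gcd}(x,y)$ to be the unique positive rational $d$ with $\nu_p(d) = \min(\nu_p(x),\nu_p(y))$ for every prime $p$. Hence two positive rationals are equal precisely when they have the same $p$-adic valuation at every prime, and it suffices to show
\[\min\bigl(\nu_p(a),\nu_p(b)\bigr) = \min\bigl(\nu_p(a-b),\nu_p(b)\bigr)\]
for each prime $p$; in the semiring $\Gamma$ the left side is $\nu_p(a) +_\Gamma \nu_p(b)$ and the right side is $\nu_p(a-b) +_\Gamma \nu_p(b)$.

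To prove this equality, first note that $\nu_p$ is a valuation in the sense of Definition \ref{def:valuation} (this is the $p$-adic example recorded immediately above), so Lemma \ref{thm:meetofsum} applies to it. Applying the lemma to the elements $a - b$ and $b$ of $\Q$, whose sum is $a$, we obtain that the three quantities $\nu_p(a-b) +_\Gamma \nu_p(b)$, $\nu_p(a) +_\Gamma \nu_p(a-b)$, and $\nu_p(a) +_\Gamma \nu_p(b)$ all coincide. Equating the first with the third gives precisely $\nu_p(a-b) +_\Gamma \nu_p(b) = \nu_p(a) +_\Gamma \nu_p(b)$, i.e.\ the asserted identity on minima. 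As $p$ was arbitrary, $\operatorname{gcd}(a-b,b)$ and $\operatorname{gcd}(a,b)$ agree at every prime and are therefore equal.

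It remains to dispose of the degenerate cases, which cause no difficulty: if $a = b$ then $a - b = 0$, $\nu_p(0) = \infty = 0_\Gamma$, and $\min(\infty,\nu_p(b)) = \nu_p(b) = \min(\nu_p(a),\nu_p(b))$, so the computation is unchanged; the cases $a = 0$ or $b = 0$ are handled the same way. The hypothesis $a \ge b$ is not essential --- it merely makes the difference $a-b$ the natural quantity to form, in parallel with the subtractive Euclidean algorithm over $\Z$ (one could equally record the module-theoretic shadow $\Z a + \Z b = \Z(a-b) + \Z b$). The only mildly delicate point is the bookkeeping in the first paragraph, namely fixing the correct notion of $\operatorname{gcd}$ for rationals and observing that it is determined by the family $(\nu_p)_p$; once that is in place, the corollary is a single application of Lemma \ref{thm:meetofsum}.
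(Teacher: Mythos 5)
Your proof is correct, but it takes a different route from the paper's. You reduce the statement to the family of $p$-adic valuations $\nu_p\colon\Q\to(\Z\cup\{\infty\},\min,+_{\Z},\infty,0)$, invoke Lemma~\ref{thm:meetofsum} at each prime to get $\min(\nu_p(a-b),\nu_p(b))=\min(\nu_p(a),\nu_p(b))$, and then reassemble the gcd from its local exponents. The paper instead observes that $x\mapsto|x|$ is itself a single valuation $\Q\to(\Q_{\ge 0},\gcd,*,0,1)$ and applies Lemma~\ref{thm:meetofsum} once to that map, directly reading off $\gcd(|a-b|,|b|)=\gcd(|a|,|b|)$. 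Both are valid, and both ultimately rest on the same lemma, but they illustrate different points. The paper's choice is deliberately meant to showcase the Giansiracusa-style generalization: it treats a partially ordered, non-totally-ordered idempotent semiring --- the ``gcd semiring'' --- as the target, which is exactly the kind of codomain that classical Krull valuation theory excludes. Your argument stays within the classical totally ordered ($\T$-style) setting and needs a separate reconciliation step (gcd is determined prime-by-prime), which you carry out carefully, but it spends effort exactly where the paper's one-line construction avoids it. Conversely, your route requires no check that $(\Q_{\ge 0},\gcd,*,0,1)$ is in fact an idempotent semiring and that $|\cdot|$ is a valuation into it, so it is arguably more self-contained for a reader who trusts the $p$-adic valuations but not the footnoted gcd semiring. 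Your treatment of the degenerate cases $a=b$, $a=0$, $b=0$ is a welcome addition that the paper leaves implicit.
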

\begin{proof}
    To prove this we just note that we have a valuation from the ring $\Q$ into the idempotent semiring $(\Q_{\ge 0}, \operatorname{gcd}, *, 0, 1)$:\footnote{Here we define that for $\frac{a}{b}, \frac{c}{d} \in \Q_{\ge 0}$: $\operatorname{gcd}(\frac{a}{b}, \frac{c}{d}) = \frac{\operatorname{gcd}(ad, bc)}{bd}$ and $\operatorname{gcd}(0,x) = x$ for all $x \in \Q_{\ge 0}$. One can check that the above definition does not depend on a choice of representation for our fractions.}
    \[x \mapsto |x|\]
    Lemma \ref{thm:meetofsum} tells us:
    \[\operatorname{gcd}(|a-b|,|-b|) = \operatorname{gcd}(|a|,|-b|).\]
\end{proof}

\begin{remark}
    We note that due to our invariance under negation and Lemma \ref{thm:meetofsum}, we can also state:
    \[\nu(a - b) + \nu(b) = \nu(a) + \nu(-b) = \nu(a) + \nu(b).\]
\end{remark}

\section{Universal Valuations}\label{sec:UnivVal}

For a fixed ring $R$ we can examine the slice category $R \downarrow \cat{ISR}$ where objects are valuations $\nu : R \to \Gamma$ and arrows are given by semiring homomorphisms that make the following triangle commute.
\[\begin{tikzcd}[cramped]
	& R \\
	\Gamma && {\Gamma'}
	\arrow["\nu"', from=1-2, to=2-1]
	\arrow["{\nu'}", from=1-2, to=2-3]
	\arrow["\varphi"', from=2-1, to=2-3]
\end{tikzcd}\]

Proposition 2.5.4 from \cite{2016} can be stated as the following.
\begin{proposition}
    $R \downarrow \cat{ISR}$ has an initial object, $R \to \Gamma_R$. 
    
    As $R \to \Gamma_R$ is initial in $R \downarrow \cat{ISR}$, it factors all other valuations of $R$. For any valuation $\nu: R \to \Gamma$ there exists a unique semiring homomorphism $\Gamma_R \to \Gamma$ such that the composite $R \to \Gamma_R \to \Gamma$ is $\nu$.

    That is, $\cat{ISR}(\Gamma_R, \Gamma')$ is in bijection with the collection of generalized valuations on $R$ with codomain $\Gamma'$.
\end{proposition}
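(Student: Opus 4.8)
The plan is a presentation by generators and relations. I would let $F := \mathbb{B}\langle R\rangle$ be the semiring of expressions on the underlying set of $R$, writing $[r]$ for the variable attached to $r \in R$; by Proposition \ref{thm:idemp1} this is an idempotent semiring, since $1_\mathbb{B} + 1_\mathbb{B} = 1_\mathbb{B}$, and indeed it is the free idempotent semiring on the set $R$ (a semiring homomorphism $F \to \Gamma$ is the same thing as a function $R \to \Gamma$). Let $C$ be the congruence on $F$ generated by the relations $[0_R] \sim 0_F$, $[1_R] \sim 1_F$, $[-1_R] \sim [1_R]$, $[a *_R b] \sim [a][b]$, and $[a +_R b] + [a] + [b] \sim [a] + [b]$, ranging over all $a, b \in R$; set $\Gamma_R := F/C$, again idempotent by Proposition \ref{thm:idemp1}, and define $\nu_R : R \to \Gamma_R$ by $\nu_R(r) = [r] + C$. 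These five families of relations are exactly the unitality, negation-invariance, multiplicativity, and superadditivity clauses of Definition \ref{def:valuation} (the last in the reformulated shape $\nu(a+b)+\nu(a)+\nu(b) = \nu(a)+\nu(b)$ noted just after that definition), so $\nu_R$ is a valuation and $R \to \Gamma_R$ is an object of $R \downarrow \cat{ISR}$.

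To see it is initial, fix an arbitrary valuation $\nu : R \to \Gamma$. Since $F$ is freely generated, the assignment $0_F \mapsto 0_\Gamma$, $1_F \mapsto 1_\Gamma$, $[r] \mapsto \nu(r)$ extends to a unique semiring homomorphism $\phi : F \to \Gamma$. I would then check that the kernel congruence $\{(x,y) \in F \times F : \phi(x) = \phi(y)\}$ contains each generating relation of $C$: this is a line-by-line transcription of the valuation axioms, for instance $\phi([a +_R b] + [a] + [b]) = \nu(a+b) + \nu(a) + \nu(b) = \nu(a) + \nu(b) = \phi([a] + [b])$ by superadditivity, and similarly for the unital, negation, and multiplicative relations. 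Because $C$ is by definition the smallest congruence containing those relations, $C$ is contained in the kernel congruence of $\phi$, so $\phi$ factors through the quotient as a semiring homomorphism $\bar\phi : \Gamma_R \to \Gamma$ with $\bar\phi \circ \nu_R = \nu$. Any homomorphism $\psi : \Gamma_R \to \Gamma$ with $\psi \circ \nu_R = \nu$ agrees with $\bar\phi$ on every $\nu_R(r)$, and these generate $\Gamma_R$ as a semiring, so $\psi = \bar\phi$; thus the factorization is unique and $R \to \Gamma_R$ is initial in $R \downarrow \cat{ISR}$.

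The concluding statement is then formal. Sending a semiring homomorphism $\psi : \Gamma_R \to \Gamma'$ to the composite $\psi \circ \nu_R$ gives a map from $\cat{ISR}(\Gamma_R, \Gamma')$ to the set of valuations $R \to \Gamma'$: it lands in valuations because $\psi$ is order-preserving (hence $\psi(\nu_R(a+b)) \ge \psi(\nu_R(a)) + \psi(\nu_R(b))$, the superadditivity identity being preserved) and visibly respects the unital, multiplicative, and negation conditions, and it is a bijection with inverse $\nu \mapsto \bar\phi$ by the previous paragraph. I expect the only real work to be bookkeeping around the free-semiring universal property — which can alternatively be unwound by defining $\phi$ directly on monomials (words in $R$) and extending additively, then checking well-definedness using idempotency and distributivity — together with the discipline of never trying to describe $C$ explicitly, only verifying that the five relation families lie inside the relevant kernel congruences. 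There is no genuine obstacle here; the content is the standard generators-and-relations construction, with the valuation axioms chosen precisely so that every valuation preserves the imposed relations.
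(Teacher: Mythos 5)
Your proof is correct and follows the construction the paper gives immediately after the proposition in Definition~\ref{UnivVal}: the paper itself does not prove this statement (it cites Proposition~2.5.4 of \cite{2016}), but the quotient presentation $\Gamma_R = \mathbb B\gen{x_R}/\!\sim$ together with the standard generators-and-relations verification you carry out is exactly how that reference establishes it. The one point worth making explicit is that the generating relations of $C$ lying in the kernel congruence of $\phi$ gives a well-defined factorization $\bar\phi$ because the kernel congruence of a semiring homomorphism is itself a congruence, so contains the congruence $C$ generated by those relations; you state this, and the rest of the argument (uniqueness from $\Gamma_R$ being generated by the $\nu_R(r)$, and the reverse direction using that semiring homomorphisms are monotone) is the right bookkeeping.
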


\begin{remark}
    For the rest of this paper we will often consider valuations over complete idempotent semirings, and consider universal valuation semirings that are universal over valuations whose codomain are complete. Here we can take advantage that we can freely complete semirings, \cite{goldstern2002completion}, and that this completion functor is a left adjoint and so it preserves colimits. In particular this means that the universal complete valuation semiring is the completion of the standard universal valuation semiring. 
\end{remark}

\cite{2016} gives a construction of $\Gamma_R$, the definition we give below is the one in \cite{bernardoni2023applications} that is a very slight modification to apply to potentially non-commutative rings.\footnote{The form in \cite{2016} is the same quotient, but the free semiring is the polynomial semiring $\mathbb B[x_R]$ rather than $\mathbb B\gen{x_R}$. $\mathbb B\gen{x_R}$ is the free idempotent semiring on the set $R$ whereas $\mathbb B[x_R]$ is the free \textit{commutative} idempotent semiring on the set $R$. If $R$ is commutative the quotient forces $\Gamma_R$ to be as well, so when considering commutative rings one may restrict to the category of commutative idempotent semirings.}
\begin{definition}\label{UnivVal}\index{Universal Valuation}
    We construct the \textbf{universal valuation} semiring of $R$, $\Gamma_R$ as follows.

    For each $a \in R$, we define a variable $x_a$. We denote the free semiring of boolean expressions over these variables as
    \[\mathbb B\gen{x_R}\]

    We then define $\Gamma_R$\index{$\Gamma_R$} as
    \[\Gamma_R = \mathbb B\gen{x_R}/\sim\]
    where $\sim$ is the congruence generated by the following relations.
    \begin{align*}
        x_{0} &\sim 0\\
        x_1 &\sim 1 \\
        x_{-1} &\sim 1\\
        x_{a}x_{b} &\sim x_{ab}\\
        x_{a + b} + x_a + x_b &\sim x_{a} + x_b
    \end{align*}

    The valuation associated with this semiring, is the map
    \[\nu(a) = [x_a].\]
\end{definition}

\begin{example}\label{ex:SmallestNC}
    This is example 8 from \cite{bernardoni2023applications}.
    
    Let $R$ be the ring of upper triangular $2 \times 2$ matrices over $\F_2$. $R$ has eight elements and they are generated by the matrices:
    \[i = \begin{bmatrix}1&0\\0&0\end{bmatrix} \qquad j = \begin{bmatrix}0&1\\0&0\end{bmatrix} \qquad k = \begin{bmatrix}0&0\\0&1\end{bmatrix}\]

    $\Gamma_R$ consists of $\mathbb B$ linear combinations of the elements: $0, 1, x_i, x_j, x_k, x_{i+j}, x_{j + k}, x_{i + j + k}$. Note that $x_{i + k} = x_1 = 1$.

    The multiplicative structure of $\Gamma_R$ is generated by the following multiplication table.
\begin{table}[H]
\centering
\begin{tabular}{|l|l|l|l|l|l|l|}
\hline
            & $x_i$ & $x_j$ & $x_k$     & $x_{i+j}$ & $x_{j+k}$ & $x_{i+j+k}$ \\ \hline
$x_i$       & $x_i$ & $x_j$ & $0$       & $x_{i+j}$ & $x_j$     & $x_{i+j}$   \\ \hline
$x_j$       & $0$   & $0$   & $x_j$     & $0$       & $x_j$     & $x_j$       \\ \hline
$x_k$       & $0$   & $0$   & $x_k$     & $0$       & $x_k$     & $x_k$       \\ \hline
$x_{i+j}$   & $x_i$ & $x_j$ & $x_j$     & $x_{i+j}$ & $0$       & $x_{i}$     \\ \hline
$x_{j+k}$   & $0$   & $0$   & $x_{j+k}$ & $0$       & $x_{j+k}$ & $x_{j+k}$   \\ \hline
$x_{i+j+k}$ & $x_i$ & $x_j$ & $x_{j+k}$ & $x_{i+j}$ & $x_k$     & $1$         \\ \hline
\end{tabular}
\end{table}

The additive structure has the following diagramatic form.
\[\begin{tikzcd}
	&& 1 \\
	{x_k} &&&& {x_i} \\
	&& {x_{i+j+k}} \\
	{x_{j+k}} &&&& {x_{i+j}} \\
	&& {x_j}
	\arrow[color={rgb,255:red,214;green,92;blue,92}, no head, from=2-1, to=2-5]
	\arrow[color={rgb,255:red,214;green,92;blue,214}, no head, from=2-5, to=5-3]
	\arrow[color={rgb,255:red,92;green,92;blue,214}, no head, from=2-1, to=5-3]
	\arrow[color={rgb,255:red,92;green,92;blue,214}, no head, from=2-1, to=4-1]
	\arrow[color={rgb,255:red,92;green,92;blue,214}, no head, from=5-3, to=4-1]
	\arrow[color={rgb,255:red,214;green,92;blue,214}, no head, from=2-5, to=4-5]
	\arrow[color={rgb,255:red,214;green,92;blue,214}, no head, from=5-3, to=4-5]
	\arrow[color={rgb,255:red,214;green,92;blue,92}, no head, from=1-3, to=2-1]
	\arrow[color={rgb,255:red,214;green,92;blue,92}, no head, from=1-3, to=2-5]
	\arrow[no head, from=4-5, to=1-3]
	\arrow[no head, from=4-1, to=1-3]
	\arrow[no head, from=4-1, to=4-5]
	\arrow[dashed, no head, from=1-3, to=3-3]
	\arrow[dashed, no head, from=5-3, to=3-3]
	\arrow[color={rgb,255:red,133;green,173;blue,150}, dashed, no head, from=2-5, to=3-3]
	\arrow[color={rgb,255:red,133;green,173;blue,150}, dashed, no head, from=3-3, to=4-1]
	\arrow[color={rgb,255:red,224;green,108;blue,82}, dashed, no head, from=3-3, to=4-5]
	\arrow[color={rgb,255:red,224;green,108;blue,82}, dashed, no head, from=3-3, to=2-1]
\end{tikzcd}\]

The sum of any two elements with the same lines type and color between them are the same item. So, for instance, we have
\[[x_i + x_k] = [x_k + 1] = [1 + x_i]\]
and
\[[1 + x_{i + j + k}] = [x_{i + j + k} + x_j] = [1 + x_j].\]

There is a single element corresponding to a non-degenerate sum of three elements, and it is obtained by the sum of any three elements such that the lines between them are distinct colors or types, e.g.:
\[[x_i + x_j + x_k] = [1 + x_{i + j} + x_j]\]
But, for instance, the element $[1 + x_{i + j} + x_{j + k}]$ is equal to $[1 + x_{i + j}]$, but is not equal to $[x_i + x_j + x_k]$.
\end{example}

Theorem 8 in \cite{bernardoni2023applications} classifies the additive structure of these universal valuation semirings.
\begin{theorem}[Structure Theorem for $\Gamma_R$]\label{thm:valladdstruct}\index{Structure Theorem for Universal Valuations}
    Let $(a_i)_{i \in I}$ and $(b_j)_{j \in J}$ be finite collections of elements in a ring $R$. In $\Gamma_R$ we have:
    \[\left[\sum_{i \in I}x_{a_i}\right] = \left[\sum_{j \in J} x_{b_j}\right]\]
    if and only if $\Span_{\Z}(a_i) = \Span_{\Z}(b_j)$.
\end{theorem}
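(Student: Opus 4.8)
The plan is to prove the two implications separately, treating the easy-sounding ``if'' direction as the one requiring the most care.

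For the ``only if'' direction I would exhibit an idempotent semiring receiving $\Gamma_R$ whose elements record exactly the $\Z$-span data. Let $\mathcal L$ be the set of additive subgroups of $(R,+)$, made into a semiring by taking addition to be the subgroup generated by the union, $M +_{\mathcal L} N = \gen{M \cup N}$, multiplication to be the subgroup generated by all products, $M *_{\mathcal L} N = \gen{mn : m \in M,\, n \in N}$, with $0_{\mathcal L} = \{0\}$ and $1_{\mathcal L} = \Z 1_R$. A short check shows $\mathcal L$ is an idempotent semiring (associativity, distributivity and the identity laws are the usual module computations; additive idempotency is $M + M = M$), and that $\nu_{\mathcal L}(a) := \Z a$ defines a valuation $R \to \mathcal L$: unitality and invariance under negation are immediate, multiplicativity is $\Z(ab) = (\Z a)(\Z b)$, and superadditivity is the inclusion $\Z(a+b) \subseteq \Z a + \Z b$ (recalling that in such ``union''-type semirings the order is reverse inclusion, so $\ge$ here means $\subseteq$). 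By the universal property of $\Gamma_R$ there is a unique homomorphism $\phi : \Gamma_R \to \mathcal L$ with $\phi([x_a]) = \Z a$; since $\phi$ is additive, $\phi\bigl(\bigl[\sum_{i} x_{a_i}\bigr]\bigr) = \sum_i \Z a_i = \Span_\Z(a_i)$. Hence equal classes in $\Gamma_R$ force equal $\Z$-spans.

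For the ``if'' direction the key is an \textbf{absorption lemma}: if an element $E \in \Gamma_R$ is represented by a sum whose summands include $x_{a_1}, \dots, x_{a_m}$, and $c \in \Span_\Z(a_1,\dots,a_m)$, then $[E + x_c] = [E]$. I would build this up from the generating relations of $\sim$: first $[x_{-a}] = [x_{(-1)a}] = [x_{-1}x_a] = [x_a]$, using $x_{-1}\sim 1$; next, if $x_u$ occurs in $E$ then $[E + x_{2u}] = [E]$ (apply the relation $x_{u+u} + x_u + x_u \sim x_u + x_u$ after duplicating $x_u$ by idempotency), and by induction $[E + x_{ku}] = [E]$ for every $k \ge 1$, hence for every $k \in \Z$ by the negation identity and $x_0 \sim 0$; then, if $x_u$ and $x_v$ both occur in $E$, the relation $x_{u+v} + x_u + x_v \sim x_u + x_v$ gives $[E + x_{u+v}] = [E]$. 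Writing $c = \sum_k n_k a_k$, one adds $x_{n_k a_k}$ for each $k$ in turn, then successively adds the partial sums, arriving at $x_c$. With the lemma in hand the conclusion is immediate: if $\Span_\Z(a_i) = \Span_\Z(b_j) = M$, then every $b_j$ lies in $M$, so repeated absorption gives $\bigl[\sum_i x_{a_i}\bigr] = \bigl[\sum_i x_{a_i} + \sum_j x_{b_j}\bigr]$, and by symmetry this also equals $\bigl[\sum_j x_{b_j}\bigr]$.

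I expect the main obstacle to be the bookkeeping in the absorption lemma: making sure each application of the relation $x_{a+b} + x_a + x_b \sim x_a + x_b$ is legitimate (the two ``parent'' terms must really be present among the current summands, after possibly duplicating them using idempotency), and chaining these moves so that an arbitrary integer combination $\sum_k n_k a_k$ is reached. Everything else — verifying the semiring axioms for $\mathcal L$, checking the valuation axioms for $\nu_{\mathcal L}$, and invoking the universal property of $\Gamma_R$ — is routine, with the only point of caution being the reverse-inclusion order convention when checking superadditivity.
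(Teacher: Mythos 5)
Your proof is correct and, as far as one can tell from the paper (which defers the proof to Theorem 8 of \cite{bernardoni2023applications} and does not reproduce it), it matches the intended approach. The remark the author places immediately after the theorem reformulates it as saying that the factoring map $\Gamma_R \to \operatorname{SubMod}^0_\Z(R)$ induced by $r \mapsto \gen{r}$ is an isomorphism; your ``only if'' direction is precisely the construction of that target semiring and map via the universal property, and your absorption lemma is what makes that map injective.

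One small point to tighten when you write this up: as sketched, the absorption argument actually produces $[E + \mathrm{aux} + x_c] = [E]$, where $\mathrm{aux}$ is the pile of intermediate terms $x_{n_k a_k}$ and partial sums $x_{s_\ell}$, rather than the cleaner statement $[E + x_c] = [E]$ that the lemma asserts and that you invoke afterwards. The gap closes immediately via the semiring order: the summands of $E$ are among those of $E+x_c$, and those of $E+x_c$ are among those of $E+\mathrm{aux}+x_c$, so additive idempotence gives $[E+\mathrm{aux}+x_c] \le [E+x_c] \le [E]$ in the paper's convention $a \le b \iff a+b=a$; since the two endpoints coincide, $[E+x_c]=[E]$. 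Everything else in the proposal — the verification that $\mathcal L$ is an idempotent semiring, that $a \mapsto \Z a$ is a valuation under the reverse-inclusion order, the use of $x_{-1}\sim 1$ to kill signs, and the induction making each application of $x_{a+b}+x_a+x_b \sim x_a+x_b$ legitimate by first duplicating a parent term via idempotency — is correct.
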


\begin{remark}
    This theorem can be phrased another way.

    Consider $R$ as a $\Z$-module, and let $\operatorname{SubMod}_{\Z}(R)$ be the collection of $\Z$ submodules of $R$. We note that if $A_i$ is an arbitrary collection of $\Z$-submodules of $R$ then $\bigcap A_i \in \operatorname{SubMod}_{\Z}(R)$. From this we can define that for $X \subseteq R$ the \textbf{submodule generated by $X$} is:
    \[\gen{X} = \bigcap_{X \subseteq A \in \operatorname{SubMod}_{\Z}(R)} A\]

    We then note that $\operatorname{SubMod}_{\Z}(R)$ has an idempotent semiring structure, with:
    \begin{align*}
        A +_{\operatorname{SubMod}_{\Z}(R)} B &= \gen{A \cup B}\\
        A *_{\operatorname{SubMod}_{\Z}(R)} B &= \gen{ab : a \in A, b \in B}\\
        0_{\operatorname{SubMod}_{\Z}(R)} &= \gen{0_R} = \{0_R\}\\
        1_{\operatorname{SubMod}_{\Z}(R)} &= \gen{1_R}
    \end{align*}

    Let $\operatorname{SubMod}^0_{\Z}(R)$ be the collection of \textit{finitely generated} $\Z$ submodules of $R$. We note that this is a subsemiring of $\operatorname{SubMod}_{\Z}(R)$. There is a surjective valuation $R \to \operatorname{SubMod}^0_{\Z}(R)$ sending $r$ to the subspace generated by $r$, $\gen{r}$. Theorem \ref{thm:valladdstruct} states that the induced factoring map between $\Gamma_R$ and $\operatorname{SubMod}^0_{\Z}(R)$ is injective as well as surjective, and so $\Gamma_R \cong \operatorname{SubMod}^0_{\Z}(R)$. We further note that if we take our universal valuation to be the limit over valuations whose codomain are \textit{complete} idempotent semirings, then our universal object would be $\operatorname{SubMod}_{\Z}(R)$.
\end{remark}

In the remainder of this paper we will classify the universal object associated with two more classes of valuations: positive valuations and positive multiplicatively idempotent valuations. In the same way that the universal valuation semiring defines $\operatorname{SubMod}_{\Z}(R)$ up to isomorphism, we will see that these two additional classes of valuations define the semirings of ideals of a ring, as well as the closed Zariski topology on the spectrum of a ring up to isomorphism.

\section{Positive Valuations}\label{sec:PostiveVal}

\begin{conventions}
    While all of the results in \cite{bernardoni2023applications} hold for commutative and non-commutative rings, for the rest of this paper we will restrict to commutative rings.

    We will be particularly interested in the product of principally generated ideals, and for noncommutative rings it is not in general true that $(a)*(b) = (ab)$. As a result of this the map $r \mapsto (r)$ is not guaranteed to be multiplicative in the noncommutative case.
\end{conventions}

\begin{definition}\label{def:PositiveVal}
    A valuation $\nu : R \to \Gamma$ is \textbf{positive}\index{Valuation!Arithmetic!Positive} if for each $r \in R$, $\nu(r) \ge 1_{\Gamma}$
\end{definition}
\begin{remark}
    When $\Gamma = \mathbb T$, this is saying that for all $r \in R$, $\nu(r) \ge 0 \in \mathbb R$.
\end{remark}

\begin{proposition}
    Let $\nu : R \to \Gamma$ be any valuation. The set $\{r \in R : \nu(r) \ge 1_{\Gamma}\}$ is a subring of $R$.
\end{proposition}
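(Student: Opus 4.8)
The plan is to verify the subring axioms directly for $A := \{r \in R : \nu(r) \ge 1_\Gamma\}$, using only the valuation axioms together with the fact that the semiring operations on $\Gamma$ are monotone for the canonical order of Definition \ref{def:SemiringOrder}. First I would unwind what membership means: $\nu(r) \ge 1_\Gamma$ says precisely $1_\Gamma + \nu(r) = 1_\Gamma$, i.e. $1_\Gamma$ is a lower bound for $\nu(r)$ in the meet-semilattice $(\Gamma, +)$. I would also record two elementary facts. First, multiplication is monotone on either side: if $a \le b$ then $ca + cb = c(a+b) = ca$, so $ca \le cb$, and symmetrically $ac \le bc$ (same computation as in the proof that idempotent-semiring homomorphisms are monotone). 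Second, if $\nu(a), \nu(b) \ge 1_\Gamma$ then $\nu(a) + \nu(b) \ge 1_\Gamma$, since $1_\Gamma + (\nu(a) + \nu(b)) = (1_\Gamma + \nu(a)) + \nu(b) = 1_\Gamma + \nu(b) = 1_\Gamma$ (equivalently: a greatest lower bound of a family lying above $1_\Gamma$ lies above $1_\Gamma$).

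With these in hand the verification is short. The unit $1_R$ lies in $A$ because $\nu(1_R) = 1_\Gamma \ge 1_\Gamma$, and $0_R$ lies in $A$ because $\nu(0_R) = 0_\Gamma$, which is the maximum element of $\Gamma$, hence $\ge 1_\Gamma$. For closure under multiplication, if $a,b \in A$ then multiplicativity gives $\nu(ab) = \nu(a)\nu(b)$, and by monotonicity of multiplication $\nu(a)\nu(b) \ge 1_\Gamma \cdot \nu(b) = \nu(b) \ge 1_\Gamma$, so $ab \in A$. For closure under addition, superadditivity gives $\nu(a+b) \ge \nu(a) + \nu(b)$, and the second fact above gives $\nu(a) + \nu(b) \ge 1_\Gamma$, so $\nu(a+b) \ge 1_\Gamma$ and $a+b \in A$. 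For closure under negation, invariance under negation combined with multiplicativity gives $\nu(-a) = \nu(-1_R)\,\nu(a) = \nu(1_R)\,\nu(a) = \nu(a) \ge 1_\Gamma$, so $-a \in A$. Thus $A$ contains $0_R$ and $1_R$ and is closed under addition, negation and multiplication, hence is a subring of $R$.

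I do not expect a real obstacle here. The only points requiring care are the direction of the order convention — so that $0_\Gamma$ is the top element and $1_\Gamma$ sits below it, making the identities land in $A$ — and the observation that closure under subtraction must be obtained from \emph{invariance under negation} (plus multiplicativity), not from superadditivity, since $\nu$ is only assumed super-additive and $\Gamma$ has no negation of its own.
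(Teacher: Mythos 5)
Your proof is correct, and the paper states this proposition without an explicit proof, so the natural direct verification is exactly what is intended. You check each subring axiom correctly, you handle the order convention carefully so that $0_\Gamma$ is the top element and the identities land in the set, and you rightly observe that closure under negation must come from the \emph{invariance under negation} axiom via $\nu(-a)=\nu(-1_R)\nu(a)=\nu(a)$ rather than from superadditivity alone.
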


\begin{definition}
    For a given valuation $\nu: R \to \Gamma$ we call the subring $\{r \in R : \nu(r) \ge 1_{\Gamma}\}$ the \textbf{$\nu$-positive subring of $R$}. When $\nu$ is understood we denote this $R_+$.
\end{definition}

\begin{remark}
    $R_+$ is the largest subring of $R$ where $\nu$ forms a positive valuation.
\end{remark}

\begin{proposition}
    The $\nu$-positive subring of the universal valuation semiring $\Gamma_R$ is exactly the image of the canonical map $\mathbb Z \to R$, sending $1_{\mathbb Z}$ to $1_R$.
\end{proposition}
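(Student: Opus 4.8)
The claim is that in $\Gamma_R$, the $\nu$-positive subring $\{\gamma \in \Gamma_R : \gamma \ge 1\}$ equals the image of the canonical ring map $\Z \to \Gamma_R$, i.e.\ the set of classes $[x_1], [x_1 + x_1], \dots$; but since $\Gamma_R$ is idempotent this image is just $\{0, 1\} = \{[x_0], [x_1]\}$ (as $1 + 1 = 1$ forces every nonzero sum of copies of $1$ to equal $1$, and $0$ comes from the empty sum — here I should be slightly careful about whether $0$ counts as being in the image of $\Z \to \Gamma_R$ as a \emph{ring} map; it does, as the image of $0_\Z$). So concretely I must show: if $\gamma \in \Gamma_R$ and $\gamma + 1 = \gamma$, then $\gamma \in \{0, 1\}$. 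The plan is to combine the Structure Theorem for $\Gamma_R$ (Theorem~\ref{thm:valladdstruct}) with the multiplicative structure of $\Gamma_R$, reducing everything to a statement about $\Z$-submodules of $R$.

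First I would reduce a general element of $\Gamma_R$ to a normal form. Every element of $\mathbb B\gen{x_R}$ is a finite sum of monomials $x_{a_1} x_{a_2} \cdots x_{a_k}$, and using the relation $x_a x_b \sim x_{ab}$ each monomial collapses to a single $x_c$ (with the empty monomial being $1 = x_1$). Hence every $\gamma \in \Gamma_R$ is of the form $[\sum_{i \in I} x_{a_i}]$ for some finite multiset of ring elements $a_i \in R$ (the zero element being the empty sum). Next, the condition $\gamma \ge 1$ means $\gamma + 1 = \gamma$, i.e.\ $[\sum_i x_{a_i} + x_1] = [\sum_i x_{a_i}]$. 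By Theorem~\ref{thm:valladdstruct} this holds if and only if $\Span_\Z(\{a_i\} \cup \{1_R\}) = \Span_\Z(\{a_i\})$, i.e.\ if and only if $1_R \in \Span_\Z(\{a_i\})$. So I need to show: if $1_R$ lies in the $\Z$-span of $a_1, \dots, a_n$, then $[\sum_i x_{a_i}]$ is $0$ or $1$ — but if $1_R$ is in the span and the span is nonzero, the span contains $1_R$, and conversely $\Span_\Z(\{a_i\})$ could be any submodule containing $1_R$. This is where I must be more careful: $\gamma \ge 1$ only pins down $\Span_\Z(\{a_i\}) \ni 1_R$, which is \emph{not} enough by additive structure alone to force $\gamma = 1$. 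The resolution must use that $\gamma$ is a \emph{subring} element, or rather use multiplicativity.

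So the key step is this: I claim that whenever $1_R \in \Span_\Z(a_1,\dots,a_n)$, one already has $[\sum_i x_{a_i}] = 1$ in $\Gamma_R$, using the semiring relations beyond the purely additive ones. Write $1_R = \sum_i m_i a_i$ with $m_i \in \Z$. I want to manipulate $\sum_i x_{a_i}$ using $x_a x_b \sim x_{ab}$, $x_{-1} \sim 1$, and superadditivity to produce $x_1 = 1$. The natural move: from Lemma~\ref{thm:meetofsum} and invariance under negation we have $x_a + x_b = x_{a+b} + x_a = x_{a+b} + x_b$ and $x_{a-b} + x_b = x_a + x_b$; iterating, $\sum_i x_{a_i}$ equals $x_{\sum_i m_i a_i} + \sum_i x_{a_i} = x_{1_R} + \sum_i x_{a_i} = 1 + \sum_i x_{a_i}$ — wait, that only re-derives $\gamma \ge 1$. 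To go the other way I need $\sum_i x_{a_i} \le 1$, i.e.\ $\sum_i x_{a_i} + 1 = \sum_i x_{a_i}$ should be strengthened; but actually I want $1 \le \sum_i x_{a_i}$ \emph{as well}, forcing $\gamma = 1$ only if also $\gamma \ge$ nothing smaller — this still is not automatic. I therefore expect the honest argument to run as follows: the positive subring is a \emph{subring}, closed under multiplication; given $\gamma = [\sum x_{a_i}] \ge 1$, consider $\gamma \cdot x_r$ for arbitrary $r \in R$ and show using $1_R \in \Span_\Z(a_i)$ that $\gamma \cdot x_r \ge x_r$ for all $r$; then since $\{x_r : r \in R\}$ generates $\Gamma_R$ additively, $\gamma$ acts as an absorbing-from-above element, and combined with $\gamma$ itself being a sum of $x_{a_i}$'s with $\Z$-span containing $1_R$ but also \emph{contained in} $R$, the Structure Theorem forces the span to be a submodule both containing $1_R$ and satisfying a closure property that pins it to all of... no.

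Let me state the plan cleanly instead. The plan is: (1) put $\gamma$ in the form $[\sum_i x_{a_i}]$; (2) apply Theorem~\ref{thm:valladdstruct} to translate $\gamma \ge 1$ into $1_R \in \Span_\Z(a_i)$, and translate $\gamma = 1$ into $\Span_\Z(a_i) = \Z\cdot 1_R$; (3) the real content is showing $1_R \in \Span_\Z(a_i)$ together with $\gamma$ being a valuation-image element (so that $\Span_\Z(a_i)$ is in fact forced to be closed under multiplication by the $a_j$ via the relation $x_{a}x_{b} \sim x_{ab}$ and superadditivity) implies $\Span_\Z(a_i) = \Z \cdot 1_R$; and (4) conclude. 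The main obstacle is precisely step (3): additive data alone is insufficient, so I must exploit the interaction of $x_a x_b \sim x_{ab}$ with the additive collapse. I expect the clean formulation to be: $\gamma \ge 1$ and $\gamma \in \Gamma_R$ imply $\gamma = \gamma \cdot \gamma \ge \gamma \cdot 1 = \gamma$ trivially, so instead observe $\gamma \ge 1 \implies \gamma \cdot x_r \ge x_r$ and also $x_r \cdot \gamma \ge x_r$, hence $\gamma + x_r + (\text{product terms}) $ collapses; the upshot, via the Structure Theorem applied to $\gamma \cdot x_{-1} = \gamma$ and $\gamma \ge 1 \ge \gamma^2$... — at this point the cleanest route is almost certainly a direct induction on the number of summands $n$ in $\gamma = [\sum_{i=1}^n x_{a_i}]$ using $1_R \in \Span_\Z(a_i)$ to peel off summands one at a time via $x_{a_i} + x_{a_j} = x_{a_i + a_j} + x_{a_i}$ and $x_a x_b = x_{ab}$, reducing the span while preserving membership of $1_R$, until only $x_{1_R} = 1$ remains. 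I will carry out step (3) by that induction; everything else is bookkeeping.
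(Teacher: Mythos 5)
There is a misreading at the outset: the proposition is not about a subset of $\Gamma_R$. By the paper's definition, the $\nu$-positive subring attached to a valuation $\nu : R \to \Gamma$ is a subring of the \emph{domain} $R$, namely $R_+ = \{r \in R : \nu(r) \ge 1_\Gamma\}$. The (admittedly clunky) phrase ``the $\nu$-positive subring of the universal valuation semiring $\Gamma_R$'' just means ``$R_+$ for the specific valuation $\nu : R \to \Gamma_R$, $r \mapsto [x_r]$,'' and the stated conclusion — the image of $\Z \to R$ — is a subset of $R$, which should have flagged the intended reading. Your version, that $\{\gamma \in \Gamma_R : \gamma \ge 1\}$ equals $\{0,1\}$, is a genuinely different and in fact false statement: for $R = \Z$ the universal valuation is already positive, so every $\gamma \in \Gamma_{\Z}$ satisfies $\gamma \ge 1$ and that set is all of $\Gamma_{\Z}$.

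You also invert the order convention once you start computing. With $a \le b \iff a + b = a$, the condition $\gamma \ge 1$ unpacks to $1 + \gamma = 1$, not to $\gamma + 1 = \gamma$; the equation you wrote says $\gamma \le 1$. This is why, after applying Theorem \ref{thm:valladdstruct}, you end up trying to force $\Span_{\Z}(a_i) = \Z\cdot 1_R$ from the weaker hypothesis $1_R \in \Span_{\Z}(a_i)$, which, as you correctly sense, cannot be done from additive data alone. That blockage is a symptom of the two upstream errors, not a missing multiplicative lemma, and your proposal never actually reaches a complete argument.

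With the intended reading and the correct inequality, the proof is the one-liner the paper gives and needs no multiplicative input at all: for $r \in R$, $\nu(r) = [x_r] \ge 1_{\Gamma_R} = [x_1]$ means $[x_1 + x_r] = [x_1]$, which by Theorem \ref{thm:valladdstruct} is equivalent to $\Span_{\Z}(1_R, r) = \Span_{\Z}(1_R)$, i.e.\ $r \in \Z\cdot 1_R$; conversely any such $r$ satisfies this, so $R_+$ is exactly the image of $\Z \to R$.
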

\begin{proof}
    This is a consequence of our structure theorem, theorem \ref{thm:valladdstruct}, as if $x_r \ge 1 = x_1$ then $r \in \Span_{\mathbb Z}(1_R)$.
\end{proof}

It is well known classically that in certain contexts the existence of positive valuations are equivalent to deep statements on the structure of the ideals of a commutative ring.

\begin{definition}
    Let $R$ be a integral domain and let $K = \operatorname{Frac}(R)$.

    If there exists a valuation $\nu : K \to \Gamma$ where $\Gamma$ is a totally ordered idempotent semiring and $K_+ = R$, then we say that \textbf{$R$ is a valuation ring.}
\end{definition}

A classical result states:
\begin{theorem}
    If a domain is a valuation ring then its ideals are totally ordered by inclusion.
\end{theorem}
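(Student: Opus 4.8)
The plan is to reduce the statement about arbitrary ideals to a divisibility statement about principal ideals, and then to read that statement off from the total order on $\Gamma$ together with the defining equality $K_+ = R$.

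First I would record the standard lemma that the ideals of a commutative ring are totally ordered by inclusion if and only if for every pair $a,b \in R$ one has $(a) \subseteq (b)$ or $(b) \subseteq (a)$. One direction is immediate. For the other, given ideals with $I \not\subseteq J$, pick $a \in I \setminus J$; then for each $b \in J$ we have $a \notin (b)$ (since $(b) \subseteq J$), hence $(a) \not\subseteq (b)$, so by hypothesis $(b) \subseteq (a) \subseteq I$; as $b$ ranges over $J$ this gives $J \subseteq I$, so any two ideals are comparable.

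Second, fix nonzero $a,b \in R$ (if one of them is $0$ the corresponding principal ideal is $(0)$ and the containment is automatic). Since $R$ is a domain, $u := a/b$ lies in $K^{\times}$ with inverse $b/a$, and multiplicativity and unitality of $\nu$ give $\nu(u)\,\nu(b/a) = \nu(1_K) = 1_\Gamma$. Because $\Gamma$ is totally ordered, $\nu(u)$ is comparable to $1_\Gamma$. If $\nu(u) \ge 1_\Gamma$, then $u \in K_+ = R$, so $a = ub \in (b)$ and $(a) \subseteq (b)$. If instead $\nu(u) \le 1_\Gamma$, i.e. $\nu(u) + 1_\Gamma = \nu(u)$, I multiply this relation on the left by $\nu(b/a)$; using left-distributivity (from $x + y = x$ one gets $zx + zy = zx$, i.e. multiplication is order preserving) together with $\nu(b/a)\nu(u) = \nu(1_K) = 1_\Gamma$, this yields $1_\Gamma + \nu(b/a) = 1_\Gamma$, that is $\nu(b/a) \ge 1_\Gamma$; hence $b/a \in K_+ = R$, so $b \in (a)$ and $(b) \subseteq (a)$. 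Combined with the lemma of the previous paragraph, this proves the theorem.

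I do not expect a real obstacle; the argument is short. The only place demanding care is the bookkeeping around the order convention — $0_\Gamma$ is the top element and $a \le b \iff a + b = a$ — in particular getting the inequality pointing the right way when passing between ``$\nu(x) \ge 1_\Gamma$'' and ``$x \in R$'', and noticing that monotonicity of multiplication (a one-line consequence of distributivity) is all one needs here, in place of the additive-inverse step that appears in the classical $\T$-valued proof.
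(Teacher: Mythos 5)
Your argument is correct and follows the same route as the paper's: reduce comparability of arbitrary ideals to comparability of principal ideals, then use total comparability of $\nu(a/b)$ with $1_\Gamma$ together with $K_+ = R$ to decide which of $(a),(b)$ contains the other. You supply two pieces the paper leaves implicit — the reduction lemma itself, and the derivation that $\nu(u) \le 1_\Gamma$ forces $\nu(u^{-1}) \ge 1_\Gamma$ via monotonicity of multiplication (a consequence of distributivity rather than a semiring axiom) — and both are handled correctly, including the bookkeeping around the $a \le b \iff a+b=a$ convention.
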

\begin{proof}
    We reproduce the classical approach. 
    
    Given a totally ordered valuation $\nu : K \to \Gamma$ then for all $x, y \in K$ either $xy^{-1} \ge 1$ giving us $xy^{-1} \in K_+ = R$ or $xy^{-1} \le 1$ giving us $yx^{-1} \in R$. If $xy^{-1} \in R$ then $(y) \subseteq (x)$ and if $y x^{-1} \in R$ then $(x) \subseteq (y)$. If the principal ideals are totally ordered then one can show that arbitrary ideals are totally ordered.
\end{proof}

This converse of this statement is also true and is a theorem by Krull, \cite{Krull1939}.

\begin{remark}
    For the above proof to work we just needed that for all $x, y \in K$ either $\nu(xy^{-1}) \le 1_{\Gamma}$ or $\nu(xy^{-1}) \ge 1_{\Gamma}$. The fact that $\Gamma$ is totally ordered gives us this, but we can generalize this considerably.
\end{remark}

\begin{proposition}
    Let $R$ be an integral domain and $K = \operatorname{Frac}(R)$.

    If there exists a valuation $\nu: K \to \Gamma$ such that for all $k \in K$, $\nu(k)$ is comparable to $1_{\Gamma}$, and $K_+ = R$, then the ideals of $R$ are totally ordered by inclusion.
\end{proposition}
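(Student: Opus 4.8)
The plan is to mimic the classical proof of the preceding theorem almost verbatim, replacing the appeal to total order with the hypothesis that every value is comparable to $1_\Gamma$. First I would take two nonzero elements $x, y \in K$ and form the quotient $k = xy^{-1} \in K$. By hypothesis $\nu(k)$ is comparable to $1_\Gamma$, so either $\nu(k) \ge 1_\Gamma$ or $\nu(k) \le 1_\Gamma$. In the first case $k = xy^{-1}$ lies in $K_+ = R$, so $x \in (y)$ as ideals of $R$, giving $(x) \subseteq (y)$. In the second case, using that $\nu$ is multiplicative and unital we get $\nu(k^{-1}) = \nu(k)^{-1}$ in the sense that $\nu(k)\nu(k^{-1}) = \nu(1_R) = 1_\Gamma$; I would need the small observation that in an idempotent semiring, if $\gamma \le 1_\Gamma$ and $\gamma$ has a multiplicative inverse $\gamma'$, then $\gamma' \ge 1_\Gamma$ (multiply $\gamma \le 1_\Gamma$, i.e. $\gamma + 1_\Gamma = \gamma$, through by $\gamma'$, using that homomorphisms — here multiplication by $\gamma'$ — are monotone, to get $1_\Gamma + \gamma' = 1_\Gamma$, i.e. $1_\Gamma \le \gamma'$, wait, that reads $\gamma' \le 1_\Gamma$ too; so more carefully one multiplies and uses $\gamma\gamma' = 1_\Gamma$ to conclude $\nu(k^{-1}) = \nu(yx^{-1}) \ge 1_\Gamma$). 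Then $yx^{-1} \in K_+ = R$, so $(y) \subseteq (x)$. Hence the principal ideals of $R$ are totally ordered by inclusion.

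The second half of the argument promotes this to all ideals, exactly as invoked (without proof) in the preceding theorem. Given two ideals $\mathfrak a, \mathfrak b$ with $\mathfrak a \not\subseteq \mathfrak b$, pick $a \in \mathfrak a \setminus \mathfrak b$; I claim $\mathfrak b \subseteq \mathfrak a$. For any $b \in \mathfrak b$, compare the principal ideals $(a)$ and $(b)$: we cannot have $(a) \subseteq (b)$, since that would force $a \in (b) \subseteq \mathfrak b$, contradicting the choice of $a$. So $(b) \subseteq (a) \subseteq \mathfrak a$, whence $b \in \mathfrak a$. As $b$ was arbitrary, $\mathfrak b \subseteq \mathfrak a$, so any two ideals are comparable.

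The only genuinely delicate point — and the step I expect to be the main obstacle — is the inversion lemma in the idempotent semiring setting: that comparability of $\nu(k)$ with $1_\Gamma$ transfers correctly to $\nu(k^{-1})$, given that $\nu(k)\nu(k^{-1}) = 1_\Gamma$. In a totally ordered semiring this is immediate; here one must argue purely from the order convention $\gamma \le \delta \iff \gamma + \delta = \gamma$ together with monotonicity of multiplication (Proposition on order-preservation of homomorphisms applies to the map $\delta \mapsto \gamma \delta$). Concretely: from $\nu(k) \le 1_\Gamma$ multiply by $\nu(k^{-1})$ to get $1_\Gamma \le \nu(k^{-1})$; and since $k \in K$ and $\nu$ is a valuation on a field, $\nu(k)$ is genuinely invertible, so this manipulation is legitimate. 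Everything else is a routine transcription of the classical valuation-ring argument, using only that $\nu$ is multiplicative, unital, and that $K_+ = R$ by hypothesis.
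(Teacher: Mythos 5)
Your proposal is correct and follows the same route as the paper, which (via the remark preceding the proposition) simply observes that the proof of the totally-ordered theorem goes through once total ordering of $\Gamma$ is weakened to comparability of each $\nu(k)$ with $1_\Gamma$. You usefully spell out the two steps the paper leaves implicit: the inversion lemma showing $\nu(k)\le 1_\Gamma$ forces $\nu(k^{-1})\ge 1_\Gamma$, and the promotion from principal ideals to arbitrary ideals.

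One thing to tidy up: your momentary hesitation inside the inversion lemma is unfounded. Under the order convention $a \le b \iff a + b = a$, the equation $1_\Gamma + \gamma' = 1_\Gamma$ is exactly the statement $1_\Gamma \le \gamma'$, i.e.\ $\gamma' \ge 1_\Gamma$; it does \emph{not} also assert $\gamma' \le 1_\Gamma$, since that would instead read $\gamma' + 1_\Gamma = \gamma'$, a different equation. So your first reading was already the correct one, and the full chain is simply: from $\gamma + 1_\Gamma = \gamma$ multiply by $\gamma'$ to get $\gamma\gamma' + \gamma' = \gamma\gamma'$, i.e.\ $1_\Gamma + \gamma' = 1_\Gamma$, i.e.\ $\gamma' \ge 1_\Gamma$.
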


While this result seems to be algebraic trickery, it suggests that there is a relationship between the space of positive valuations and the structure of the ideals of a semiring. This connection turns out to be a deep, and arguably defining, connection. We will formalize this connection, showing that every positive valuation (with complete codomain) admits a Galois connection with the lattice of ideals of a ring $R$, as well as showing that the semiring of ideals of a ring $R$ can be identified as the universal positive valuation that factors all other positive valuations with complete codomain.

Just as there is a universal valuation semiring, there is a universal \textit{positive} valuation semiring that factors all positive valuations.

\begin{definition}
    Given a ring $R$, the \textbf{universal positive valuation semiring}\index{Universal Positive Valuation}, $\Gamma_R^+$\index{$\Gamma_R^+$} is the semiring:
    \[\Gamma_R/\gen{x_r + 1 = 1}.\]
\end{definition}

\begin{proposition}
    This semiring, equipped with the valuation $\nu(r) = [r]$, is initial among the positive valuations of $R$.
\end{proposition}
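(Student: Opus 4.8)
The plan is to deduce this from the two universal properties already available: the initiality of the (unrestricted) universal valuation $\nu_R : R \to \Gamma_R$ in $R \downarrow \cat{ISR}$, and the universal property of the quotient semiring. Throughout, write $q : \Gamma_R \to \Gamma_R^+ = \Gamma_R/C$ for the quotient map, where $C = \gen{x_r + 1 = 1 : r \in R}$, so that the valuation in the statement is $\nu = q \circ \nu_R$.

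First I would check that $\nu$ is a positive valuation, i.e. that it is genuinely an object of the category in question. That $\nu$ is a valuation is routine: $q$ is a semiring homomorphism, homomorphisms between idempotent semirings are monotone, and post-composing a valuation with a monotone semiring homomorphism visibly preserves unitality and multiplicativity, preserves superadditivity (apply monotonicity of $q$ to $\nu_R(a+b) \ge \nu_R(a) + \nu_R(b)$), and preserves invariance under negation. Positivity is exactly the content of the imposed relations: in $\Gamma_R^+$ the relation $x_r + 1 = 1$ reads $\nu(r) + 1_{\Gamma_R^+} = 1_{\Gamma_R^+}$, which by the order convention $a \le b \iff a + b = a$ says $1_{\Gamma_R^+} \le \nu(r)$.

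Now let $\mu : R \to \Gamma$ be an arbitrary positive valuation. By initiality of $\Gamma_R$ there is a unique semiring homomorphism $\phi : \Gamma_R \to \Gamma$ with $\phi \circ \nu_R = \mu$. The key step is to show that $\phi$ descends along $q$. Since $\{(a,b) : \phi(a) = \phi(b)\}$ is itself a congruence on $\Gamma_R$, and $C$ is by definition the smallest congruence containing the pairs $(x_r + 1,\,1)$, it suffices to verify $\phi(x_r + 1) = \phi(1)$ for each $r$. This is a one-line computation using positivity: $\phi(x_r + 1) = \phi(x_r) + \phi(1) = \mu(r) + 1_\Gamma$, and $\mu(r) \ge 1_\Gamma$ means $1_\Gamma + \mu(r) = 1_\Gamma$, so $\phi(x_r + 1) = 1_\Gamma = \phi(1)$. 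Hence $C$ is contained in the kernel congruence of $\phi$, and $\phi$ factors uniquely as $\phi = \psi \circ q$ for a semiring homomorphism $\psi : \Gamma_R^+ \to \Gamma$. Then $\psi \circ \nu = \psi \circ q \circ \nu_R = \phi \circ \nu_R = \mu$, so $\psi$ is a morphism in the slice category from $\nu$ to $\mu$. For uniqueness, any $\psi'$ with $\psi' \circ \nu = \mu$ gives that $\psi' \circ q : \Gamma_R \to \Gamma$ satisfies $(\psi' \circ q) \circ \nu_R = \mu$, whence $\psi' \circ q = \phi = \psi \circ q$ by the uniqueness clause of $\Gamma_R$'s universal property; since $q$ is surjective, hence epi, $\psi' = \psi$.

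I do not expect a real obstacle: the argument is a diagram chase resting on two universal properties already in hand. The only point deserving care is the observation that positivity of $\mu$ is precisely what makes $\phi$ kill the generating relations of $C$ — no more and no less — combined with the standard fact that a homomorphism respects a generated congruence as soon as it respects a generating set of relations.
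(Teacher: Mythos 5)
Your argument is correct and complete. The paper actually states this proposition without proof, and what you have written is the natural argument: compose the two universal properties (initiality of $\Gamma_R$ in $R \downarrow \cat{ISR}$ and the universal property of the quotient by a generated congruence), and observe that positivity of a target valuation $\mu$ is exactly the condition needed for the induced map out of $\Gamma_R$ to collapse each generating relation $(x_r + 1,\,1)$ of the congruence. The key verifications — that $q$ is monotone so superadditivity survives post-composition, that $\mu(r) \ge 1_\Gamma$ unwinds via the paper's order convention to $\mu(r) + 1_\Gamma = 1_\Gamma$, and that surjectivity of $q$ supplies uniqueness of the factored map — are all present and correct.
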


\begin{definition}
    Given a (commutative or non-commutative) ring $R$, the set of two sided ideals of $R$, $I(R)$ has an idempotent semiring structure, with
    \[I + J = \{a + b : a \in I, b \in J\},\]
    \[I * J = \left(\sum_{i = 0}^N a_i b_i : a_i \in I, b_i \in J, N \in \mathbb N\right).\]

    In general $\sum I_j$ is the ideal generated by finite sums of elements in the indexed ideals, and $\prod_{j = 1}^N I_j$ is the ideal generated by finite sums of elements of the form $a_1*a_2*...*a_N$ for $a_j \in I_j$.
    
    Under this structure we have:
    \[1_{I(R)} = (1), \quad 0_{I(R)} = (0)\]
    \[I \le J \iff I \supseteq J.\]

    We call this the \textbf{semiring of ideals of $R$}\index{Semiring!Semiring of Ideals}.
\end{definition}

\begin{definition}
    When $R$ is commutative we may form a positive valuation $\iota: R \to I(R)$ under the mapping
    \[\iota : x \mapsto (x).\]
\end{definition}

\begin{theorem}\label{thm:GaloisConnection}
    Let $\nu : R \to \Gamma$ be a positive valuation, and let $\Gamma$ be a complete semiring\footnote{Or at least closed under sums of cardinality $|R|$ or less.}. Then there exists a monotone Galois connection between $I(R)$ and $\Gamma$.

    That is, there exist monotone maps $N : I(R) \to \Gamma$ and $I: \Gamma \to I(R)$ such that
    \[\gamma \le N(J) \iff I(\gamma) \le J\]
    for $\gamma \in \Gamma$, $J \in I(R)$.
\end{theorem}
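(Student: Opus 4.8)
The plan is to exhibit the Galois connection by writing down both maps explicitly. Define $N : I(R) \to \Gamma$ by $N(J) = \sum_{r \in J} \nu(r)$; this sum exists because $|J| \le |R|$ and $\Gamma$ is assumed closed under sums of that size. Define $I : \Gamma \to I(R)$ by $I(\gamma) = \{r \in R : \nu(r) \ge \gamma\}$. The proof then splits into three tasks: check that $I(\gamma)$ is genuinely an ideal, check that $N$ and $I$ are monotone, and verify the adjunction $\gamma \le N(J) \iff I(\gamma) \le J$ — bearing in mind that $\le$ on $I(R)$ is reverse inclusion, so the right-hand side is really the condition $J \subseteq I(\gamma)$.

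First I would check $I(\gamma)$ is an ideal. It contains $0_R$ because $\nu(0_R) = 0_\Gamma$ is the top element of $\Gamma$. If $r,s \in I(\gamma)$ then $\gamma$ is a lower bound of $\{\nu(r),\nu(s)\}$, so $\gamma \le \nu(r) + \nu(s) \le \nu(r+s)$ by superadditivity, giving $r+s \in I(\gamma)$; invariance under negation yields $\nu(-r) = \nu(-1)\nu(r) = \nu(r) \ge \gamma$. For absorption, take $r \in I(\gamma)$ and $s \in R$: multiplying $\gamma \le \nu(r)$ on the right by $\nu(s)$, and using positivity $1_\Gamma \le \nu(s)$ together with monotonicity of multiplication, gives $\gamma = \gamma\cdot 1_\Gamma \le \gamma\,\nu(s) \le \nu(r)\nu(s) = \nu(rs)$, so $rs \in I(\gamma)$. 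This absorption step is the one place where positivity is genuinely used — without it $I(\gamma)$ is only a subring (or a $\Z$-submodule), which is morally why the semiring of ideals, rather than of submodules, is the universal object for positive valuations.

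For monotonicity: if $J_1 \supseteq J_2$ then $N(J_1) = N(J_2) + \sum_{r \in J_1 \setminus J_2}\nu(r) \le N(J_2)$ by the decomposition law and idempotency, so $N$ is monotone $I(R) \to \Gamma$; if $\gamma_1 \le \gamma_2$ then $\nu(r) \ge \gamma_2$ implies $\nu(r) \ge \gamma_1$, whence $I(\gamma_2) \subseteq I(\gamma_1)$, i.e. $I(\gamma_1) \le I(\gamma_2)$. (In fact monotonicity of both maps is automatic once the adjunction is proved.) Finally, the adjunction: in a complete semiring the infinitary sum is exactly the infimum for the semiring order — this is the pivotal fact. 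If $J \subseteq I(\gamma)$ then $\gamma$ is a lower bound of $\{\nu(r) : r \in J\}$, so $\gamma \le \bigwedge_{r \in J}\nu(r) = N(J)$; conversely, if $\gamma \le N(J)$ then for each $r \in J$ we have $N(J) \le \nu(r)$, hence $\gamma \le \nu(r)$, i.e. $r \in I(\gamma)$, so $J \subseteq I(\gamma)$.

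The main obstacle — essentially the only step that is not pure bookkeeping — is the verification that $I(\gamma)$ is closed under multiplication by arbitrary ring elements, which is exactly where the positivity hypothesis enters. Everything else is a careful exercise in the order conventions of Section \ref{sec:semiring} (keeping straight that $a \le b \iff a + b = a$ in $\Gamma$ while $I \le J \iff I \supseteq J$ in $I(R)$) and the defining properties of a valuation. I would also take mild care to note that $N$ only requires $|R|$-indexed sums (matching the footnote), and that $N$ and $I$ need not be semiring homomorphisms — a Galois connection asks only for monotone maps, which is all the statement claims.
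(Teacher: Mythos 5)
Your proof is correct and follows essentially the same route as the paper's: both define $N(J) = \sum_{r\in J}\nu(r)$ and $I(\gamma)=\{r\in R : \nu(r)\ge\gamma\}$, both verify that $I(\gamma)$ is an ideal using superadditivity plus positivity for the absorption step, and both derive the adjunction from the fact that the infinitary sum is a lower bound of (and, for the other direction, is bounded below by) any common lower bound of its summands. The only cosmetic differences are that you check closure under $+$ and negation separately where the paper checks $a-b$ in one step, and you spell out the monotonicity of $N$ and $I$ explicitly even though (as you note) it is automatic from the adjunction. One small caveat that applies equally to the paper's own argument: the step $\sum_{r\in J}\gamma = \gamma$ (equivalently, that $N(J)$ is the \emph{greatest} lower bound rather than merely a lower bound) requires infinitary idempotency of the complete monoid $(\Gamma,+)$, which is not literally among the decomposition axioms listed but is standard for complete idempotent semirings; you flag it as ``the pivotal fact,'' which is a reasonable thing to make explicit.
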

\begin{proof}
    For each $\gamma \in \Gamma$ we first check that the set $I(\gamma) = \{r \in R : \nu(r) \ge \gamma\}$ forms an ideal.
    \begin{itemize}
        \item For all $\gamma$, $0_R \in I(\gamma)$.
        \item If $a, b \in I(\gamma)$ then $\nu(a - b) \ge \nu(a) + \nu(b) \ge \gamma$, and so $a - b \in I(\gamma)$.
        \item Let $a \in I(\gamma)$ and $r \in R$. $\nu(ra) = \nu(r) \nu(a) \ge 1_{\Gamma}*\gamma = \gamma$ and so $ra$ and similarly $ar$ are in $I(\gamma)$.
    \end{itemize}

    The map $\gamma \mapsto I(\gamma)$ is not quite a semiring homomorphism as it is not necessarily additive, but it is a monic map and if $R$ is a PID or a commutative Noetherian ring then it is guaranteed to be multiplicative.

    Similarly for any ideal $I$, as $\Gamma$ is complete we can define the map $N$ by $N(I) = \sum_{x \in I} \nu(x)$.
    
    We finally prove that $\gamma \le N(J)$ if and only if $I(\gamma) \le J$.

    Let $\gamma \le N(J)$, then for all $x \in J$ we have that $\nu(x) \ge \gamma$ and so $x \in I(\gamma)$. Therefore $I(\gamma) \supseteq J$ or $I(\gamma) \le J$ by Definition \ref{def:SemiringOrder}.

    Let $I(\gamma) \le J$, or similarly $J \subseteq I(\gamma)$, then for all $x \in J$ we have that $\nu(x) \ge \gamma$, and so $\sum_{x \in J} \nu(x) \ge \sum_{x \in J} \gamma = \gamma$.
\end{proof}
\begin{remark}
    This Galois connection means we can form an idempotent closure operator on $\Gamma$ by
    \[\gamma \mapsto N(I(\gamma)).\]
    Note that $\gamma \le N(I(\gamma))$ always holds.

    We can also form an idempotent kernel operator on $I(R)$ by
    \[J \mapsto I(N(J))\]
    We similarly also have that $I(N(J)) \le J$ or $I(N(J)) \supseteq J$.
\end{remark}

\begin{example}
    Consider $\Z$ equipped with the $p$-adic valuation.

    $\Z$ is a PID, and we get that for $(x) \subseteq Z$, $N((x)) = \nu(x)$.

    Our kernel operator is $I(N(x)) = (p^{\nu(x)})$, our closure operator $N(I(a)) = N((p^a)) = a$ for all $a$. Under the $3$-adic valuation we get that $I(N((2)) = \Z$, but $I(N((6))) = (3) = I(N((24)))$.
\end{example}

\begin{theorem}
    For any positive valuation $\nu: R \to \Gamma$ for which $\Gamma$ is complete, the induced map $N: I(R) \to \Gamma$ is a complete semiring homomorphism whuch factors $\nu$. That is $\nu = N \circ \iota$, where $\iota$ is the valuation $\iota(r) = (r)$ on $I(R)$.
\end{theorem}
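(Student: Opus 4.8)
The plan is to work directly from the formula $N(I) = \sum_{x \in I}\nu(x)$ used in the proof of Theorem~\ref{thm:GaloisConnection}; this makes sense since $\Gamma$ is complete (and $|I|\le|R|$). First I would record a few elementary facts about a complete idempotent semiring, all immediate from idempotency, distributivity and the decomposition laws for infinitary sums: multiplication is monotone in each argument; $\sum_{i\in I}a_i \le a_j$ for every $j$; and the \emph{absorption principle} --- if $c\le a_i$ for every $i$ in a nonempty index set $I$, then $c + \sum_{i\in I}a_i = c$, which follows from the chain $c + \sum_i a_i = \sum_i c + \sum_i a_i = \sum_i(c+a_i) = \sum_i c = c$.

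With these in hand the verification splits into four routine checks. \emph{Unitality and the factoring} are where positivity of $\nu$ enters: since $\nu(s)\ge 1_\Gamma$ for every $s\in R$ (Definition~\ref{def:PositiveVal}), absorption gives $\sum_{s\in R}\nu(s) = \nu(1_R) + \sum_{s\ne 1_R}\nu(s) = 1_\Gamma$, so $N(1_{I(R)}) = N(R) = 1_\Gamma$ and $N(0_{I(R)}) = N(\{0_R\}) = \nu(0_R) = 0_\Gamma$; and, writing $(r) = \{sr : s\in R\}$ in the commutative ring $R$, infinitary distributivity and multiplicativity of $\nu$ give
\[ N(\iota(r)) = \sum_{s\in R}\nu(sr) = \Bigl(\sum_{s\in R}\nu(s)\Bigr)\nu(r) = 1_\Gamma\,\nu(r) = \nu(r), \]
so $\nu = N\circ\iota$.

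For \emph{complete additivity}, $N\bigl(\sum_j I_j\bigr)\le N(I_k)$ for each $k$ by monotonicity of $N$ (each $I_k\subseteq\sum_j I_j$), so absorption over the index yields $N\bigl(\sum_j I_j\bigr)\le\sum_j N(I_j)$ (the empty family being trivial); conversely any $z\in\sum_j I_j$ is a finite sum $z = x_1+\cdots+x_n$ with each $x_\ell\in I_{j_\ell}$, so superadditivity together with $\nu(x_\ell)\ge N(I_{j_\ell})\ge\sum_j N(I_j)$ gives $\nu(z)\ge\sum_\ell\nu(x_\ell)\ge\sum_j N(I_j)$, and absorption over $z\in\sum_j I_j$ gives $\sum_j N(I_j)\le N\bigl(\sum_j I_j\bigr)$. (Alternatively, additivity is immediate from Theorem~\ref{thm:GaloisConnection}, since the upper adjoint of a Galois connection preserves meets and meets are computed by addition in these semirings.) For \emph{multiplicativity}, infinitary distributivity and multiplicativity of $\nu$ give $N(I)N(J) = \sum_{(x,y)\in I\times J}\nu(xy)$; since each $xy\in IJ$ we have $N(IJ)\le\nu(xy)$, so absorption over $I\times J$ gives $N(IJ)\le N(I)N(J)$; conversely any $z\in IJ$ can be written $z = \sum_k a_kb_k$ with $a_k\in I$, $b_k\in J$, and superadditivity with monotonicity of multiplication and $\nu(a_k)\ge N(I)$, $\nu(b_k)\ge N(J)$ gives $\nu(z)\ge\sum_k\nu(a_k)\nu(b_k)\ge N(I)N(J)$, whence absorption over $z\in IJ$ gives $N(I)N(J)\le N(IJ)$. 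Assembling these checks shows $N$ is a complete semiring homomorphism with $\nu = N\circ\iota$.

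I expect the multiplicativity, and specifically the inequality $N(I)N(J)\le N(IJ)$, to be the main obstacle: one must use that $IJ$ is the ideal \emph{generated} by the products $ab$ rather than merely the set of such products, and push the estimate through an arbitrary element $z=\sum_k a_kb_k$ via superadditivity. More broadly, the recurring technical point is the careful bookkeeping of the (possibly infinite) sums in $\Gamma$ through idempotency, the decomposition laws and the absorption principle.
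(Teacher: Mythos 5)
Your proof is correct and takes essentially the same approach as the paper: both work directly with $N(I)=\sum_{x\in I}\nu(x)$, establish each half of the additivity and multiplicativity identities by pairing superadditivity/multiplicativity of $\nu$ against the fact that in an idempotent semiring a sum lies below every subsum, and extract the factoring $\nu = N\circ\iota$ from positivity. Your write-up is somewhat more explicit than the paper's (isolating the absorption principle as a lemma, checking unitality $N(R)=1_\Gamma$ outright, and deriving $N((r))=\nu(r)$ by factoring $\nu(r)$ out of the sum via infinitary distributivity rather than observing that every term $\nu(sr)$ dominates $\nu(r)$ with equality at $s=1_R$), but the skeleton of the argument is the same.
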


\begin{proof}
    While the map $I$ does not always form a semiring homomorphism, we can see that $N$ does, as
    \[N(I + J) = \sum_{x + y, x \in I, j \in J} \nu(x + y) \ge \sum_{x + y, x \in I, j \in J} \nu(x) + \nu(y) = \sum_{x \in I} \nu(x) + \sum_{y \in J} \nu(y).\]
    As $0 \in I \cap J$, and we have that in an idempotent semiring sums are always less than or equal to subsums, we have
    \[\sum_{x + y, x \in I, j \in J} \nu(x + y) \le \sum_{x \in I} \nu(x)\]
    and
    \[\sum_{x + y, x \in I, j \in J} \nu(x + y) \le \sum_{y \in J} \nu(y)\]
    and so due to additive idempotence we get
    \[\sum_{x + y, x \in I, j \in J} \nu(x + y) \le \sum_{x \in I} \nu(x) + \sum_{y \in J} \nu(y)\]
    and so $N$ forms an additive homomorphism. By finite superadditivity and the fact that for an infinite sum, elements in $\sum J_i$ are finite sums of elements in $J_i$'s, we can see that $N$ also respects the infinitary sums of $I(R)$.

    We have that by superadditivity 
    \[N(I * J) \ge \sum_{a \in I, b \in J} \nu(a)\nu(b) = \sum_{a \in I}\nu(a) * \sum_{b \in J} \nu(b),\] however as $I*J \supseteq \{a b : a \in I, b \in J\}$ we have that $\sum_{a \in I}\nu(a) * \sum_{b \in J} \nu(b)$ is a subsum of $N(I * J)$ and thus is greater than or equal to it. Thus $N(I*J) = N(I)*N(J)$.

    $N$ forms a map of positive valuations. Let $\nu: R \to \Gamma$ be a positive valuation. As $\nu(r) \ge 1_{\Gamma}$ for all $r \in \Gamma$ then for any $x \in \Gamma$ we have $\nu(rx) = \nu(r)\nu(x) \ge \nu(x)$. Thus for any any principal ideal we have $N((x)) = \nu(x)$, and so $\nu = N\circ\iota$.
\end{proof}

\begin{remark}
    Once we know that the map $N: I(R) \to \Gamma$ exists it is not suprising that there \textit{exists} a Galois correspondence. Complete idempotent semiring homomorphisms are meet preserving maps from posets that have all meets, and so by the adjoint functor theorem for posets every complete idempotent semiring homomorphism admits an adjoint. The nLab article \cite{nlab:adjoint_functor_theorem} has more details on this construction.

    It is suprising however that the specific adjoint of $N: I(R) \to \Gamma$ is cleanly given by the preimage of the upward closure of an element in $\Gamma$: $I(\gamma) = \nu^{-1}\left(\{\rho : \rho \ge \gamma\}\right)$
\end{remark}

\begin{remark}
    We note that if $\{a_i\} \subseteq R$ generate the ideal $J$ then $N(J) = \sum a_i$.

    If $x \in J$ we can write it as a finite sum $x = \sum r_i a_i$. By superadditivity and positivity of $\nu$ we get 
    \[\nu(x) \ge \sum \nu(r_i)\nu(a_i) \ge \sum \nu(a_i).\]
    The behavior of $N$ is entirely determined by the valuation on generating sets of our ideals.
\end{remark}

\begin{corollary}\label{thm:I(R) initial}
    $I(R)$ is initial among positive valuations whose valuation semiring is complete for sums of size $|R|$ or less.\footnote{Strictly speaking: Let $\beth$ be the largest cardinality of a minimal generating set of an an ideal of $R$, we can define the map $N : I(R) \to \Gamma$ as long as $\Gamma$ is closed under sums of cardinality $\beth$ or less.}

    If $R$ is Noetherian then $I(R)$ is initial among all positive valuations.
\end{corollary}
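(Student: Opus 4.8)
The plan is to build directly on the factoring homomorphism $N$ produced above: once $N$ exists and factors $\nu$, establishing initiality requires only that $(I(R),\iota)$ genuinely be an object of the relevant category and that the factoring morphism be \emph{unique}. First I would record the easy facts. The map $\iota\colon R\to I(R)$, $r\mapsto(r)$, is a positive valuation: unitality ($\iota(0_R)=(0)=0_{I(R)}$, $\iota(1_R)=(1)=1_{I(R)}$) and invariance under negation ($\iota(-1_R)=(-1)=(1)$) are immediate, multiplicativity $(a)(b)=(ab)$ uses commutativity of $R$, superadditivity is the containment $a+b\in(a,b)$, i.e.\ $(a+b)\subseteq(a,b)$, read through the order of Definition~\ref{def:SemiringOrder}, and positivity is the trivial inclusion $(r)\subseteq R=1_{I(R)}$. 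Moreover $I(R)$ is a complete semiring: $\sum_i J_i$ is the ideal generated by $\bigcup_i J_i$, and infinitary distributivity holds because an element of $A\cdot\sum_iJ_i$ is a finite sum $\sum_m a_mx_m$ with each $x_m$ itself a finite sum drawn from various $J_i$, hence lies in $\sum_i A\cdot J_i$. Thus $(I(R),\iota)$ lies in the category of positive valuations with $|R|$-complete (indeed complete) codomain.

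For existence of a factoring morphism I would invoke Theorem~\ref{thm:GaloisConnection} and the theorem following it: for any positive $\nu\colon R\to\Gamma$ with $\Gamma$ closed under sums of size $\le|R|$, the map $N(J)=\sum_{x\in J}\nu(x)$ is a ($|R|$-)complete semiring homomorphism with $N\circ\iota=\nu$. Uniqueness is the crux. I would use that every ideal is, in $I(R)$, the sum of its principal subideals over an index set of size $\le|R|$:
\[J=\sum_{x\in J}(x),\]
since each $(x)\subseteq J$ while $x=1_R\cdot x\in(x)$. Hence if $\phi\colon I(R)\to\Gamma$ is any $|R|$-complete semiring homomorphism over $R$, then $\phi(J)=\sum_{x\in J}\phi((x))=\sum_{x\in J}\phi(\iota(x))=\sum_{x\in J}\nu(x)=N(J)$, so $\phi=N$. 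The sharper bound of the footnote --- $\beth$ the supremum of cardinalities of minimal generating sets of ideals of $R$ --- comes from replacing the indexing set $J$ by a minimal generating set of $J$ throughout.

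For the Noetherian case I would drop every completeness hypothesis, allowing $\Gamma$ to be an arbitrary idempotent semiring. Now each ideal is finitely generated, $J=(a_1,\dots,a_n)$, so $J=(a_1)+\cdots+(a_n)$ is a \emph{finite} sum in $I(R)$; set $N(J)=\nu(a_1)+\cdots+\nu(a_n)$. Well-definedness follows from superadditivity and positivity: if $b\in J$ then writing $b$ as an $R$-combination of the $a_i$ gives $\nu(b)\ge\nu(a_1)+\cdots+\nu(a_n)$, and symmetrically, so any two finite generating sets of $J$ yield equal sums once one recalls that in an idempotent semiring a sum is $\le$ each of its subsums. That this $N$ is a semiring homomorphism is the finitary content of the argument already given (additivity from idempotence, multiplicativity from superadditivity together with $(a)(b)\subseteq IJ$), and the uniqueness computation is again a finite sum: $\phi(J)=\sum_i\phi((a_i))=\sum_i\nu(a_i)=N(J)$ for any semiring homomorphism $\phi$ over $R$. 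Hence $(I(R),\iota)$ is initial among all positive valuations.

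The main obstacle is not a new idea but careful bookkeeping: one must check that the cardinality hypothesis on $\Gamma$ is simultaneously strong enough to make $\sum_{x\in J}\nu(x)$ both defined and preserved by the comparison homomorphism (existence) and strong enough to pin $\phi$ down on every single ideal (uniqueness); and, in the Noetherian case, one must re-verify that the finitely-generated formula for $N$ is independent of the generating set and homomorphic \emph{without} recourse to completeness, which is exactly where the superadditivity-plus-positivity comparison of generating sets does the work.
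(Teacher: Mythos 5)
Your proposal is correct and follows essentially the same route as the paper: existence of the factoring map is exactly the map $N(J)=\sum_{x\in J}\nu(x)$ constructed in the theorem immediately preceding the corollary, and the Noetherian reduction to finite sums is precisely the remark the paper attaches. The paper treats the corollary as immediate from that theorem and does not spell out the uniqueness half of initiality or the verification that $(I(R),\iota)$ is itself an object of the slice category; your decomposition $J=\sum_{x\in J}(x)$ and the ensuing computation $\phi(J)=\sum_{x\in J}\phi(\iota(x))=N(J)$ supply exactly the missing uniqueness argument, and your check that $\iota$ is a positive valuation and that $I(R)$ is ($|R|$-)complete supplies the other. These are genuine (if small) gaps in the paper's terse presentation, and your filled-in version is correct, including the well-definedness-by-symmetry argument for $N$ in the Noetherian case.
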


Here we note that if $R$ is Noetherian then every ideal is finitely generated, and so we can reduce every sum, $N(I)$, to a finite sum.

\begin{corollary}
    If $R$ is Noetherian, then $I(R) \cong \Gamma_R^+ = \Gamma_R/\gen{x_r + 1 = 1}$
\end{corollary}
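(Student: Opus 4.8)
The plan is to prove this purely formally, by recognizing that $\Gamma_R^+$ and $I(R)$ are two initial objects of the same category and invoking the uniqueness of initial objects up to unique isomorphism. The relevant category is the ``slice'' of positive valuations of $R$: objects are positive valuations $\nu : R \to \Gamma$, and a morphism $(\nu : R \to \Gamma) \to (\nu' : R \to \Gamma')$ is a semiring homomorphism $\varphi : \Gamma \to \Gamma'$ with $\varphi \circ \nu = \nu'$.

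First I would recall the two inputs. By the proposition defining $\Gamma_R^+$, the valuation $r \mapsto [x_r]$ makes $\Gamma_R^+$ initial among all positive valuations of $R$. By Corollary \ref{thm:I(R) initial}, since $R$ is Noetherian, the valuation $\iota : r \mapsto (r)$ makes $I(R)$ initial among all positive valuations of $R$ as well --- this is exactly the clause where Noetherianity is used, since it forces each $N(I)$ to be a \emph{finite} sum $N((a_1)) + \dots + N((a_n))$ for a finite generating set of $I$, so no completeness hypothesis on the codomain is needed. Both $\iota$ and $r \mapsto [x_r]$ are genuine positive valuations (the former established earlier, the latter by construction), so both $(\iota : R \to I(R))$ and $([-]:R\to\Gamma_R^+)$ are honest objects of the category above.

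Then I would conclude: the universal property of $\Gamma_R^+$ gives a unique morphism $\varphi : \Gamma_R^+ \to I(R)$, necessarily $[x_r] \mapsto (r)$; the universal property of $I(R)$ (Noetherian case) gives a unique morphism $\psi : I(R) \to \Gamma_R^+$. The composite $\psi \circ \varphi$ is an endomorphism of the initial object $\Gamma_R^+$, hence $\psi \circ \varphi = \mathrm{id}_{\Gamma_R^+}$; symmetrically $\varphi \circ \psi = \mathrm{id}_{I(R)}$. Thus $\varphi$ is an isomorphism of idempotent semirings (indeed an isomorphism of positive valuations), so $I(R) \cong \Gamma_R^+ = \Gamma_R/\gen{x_r + 1 = 1}$.

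There is essentially no obstacle beyond bookkeeping, since the substantive work lives in Corollary \ref{thm:I(R) initial} and the defining proposition of $\Gamma_R^+$; the one point to state carefully is that these two initiality statements are in the \emph{same} category (all positive valuations of $R$, with no side hypothesis), which is what lets the abstract ``two initial objects are uniquely isomorphic'' argument apply. As a concrete cross-check one could instead exhibit the isomorphism directly: the assignment $[x_r] \mapsto (r)$ is well defined because the defining relations of $\Gamma_R^+$ hold in $I(R)$ --- $(0) = 0_{I(R)}$, $(1) = 1_{I(R)} = (-1)$, $(a)(b) = (ab)$ by commutativity, $(a+b) + (a) + (b) = (a) + (b)$ since $(a+b) \subseteq (a,b)$ --- it is surjective because $R$ Noetherian makes every ideal a finite sum of principal ideals, and injectivity then follows from the existence of the inverse $\psi$, i.e. from initiality of $I(R)$ again.
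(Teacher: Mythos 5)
Your argument is correct and is exactly what the paper intends: $\Gamma_R^+$ is initial among all positive valuations by construction, the preceding corollary makes $I(R)$ initial in that same category once $R$ is Noetherian, and two initial objects of one category are uniquely isomorphic. The concrete cross-check at the end (the relations $x_r + 1 = 1$ etc.\ hold in $I(R)$, surjectivity from finite generation) is a nice addition but not needed; the paper leaves the corollary unproved precisely because the abstract argument is immediate.
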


\begin{example}
    We will briefly show using example \ref{ex:SmallestNC} why our assumption of commutativity is necessary. Here $R$ is the subring of $M_2(\Z_2)$ consisting of upper triangular matrices. We will name elements of $R$ as:
    \[i = \begin{bmatrix}1&0\\0&0\end{bmatrix} \qquad j = \begin{bmatrix}0&1\\0&0\end{bmatrix} \qquad k = \begin{bmatrix}0&0\\0&1\end{bmatrix}\]

    Here we note that $R$ is finite and every semiring is closed under finite sums, and so if our theorem held without requiring commutativity we would have $\Gamma_R^+ \cong I(R)$.

    The elements of $I(R)$ are the two sided ideals:
    \begin{align*}
        (0) &= \{0\}\\
        (i) = (i + j) &= \{0, i, j, i + j\}\\
        (j) &= \{0, j\}\\
        (k) = (j + k) &= \{0, j, k, j + k\}\\
        (1) = (i + k) = (i + j + k) &= R
    \end{align*}

    The multiplication table of $I(R)$ is the following.
\[
\begin{tabular}{|l|l|l|l|}
\hline
      & $(i)$ & $(j)$ & $(k)$ \\ \hline
$(i)$ & $(i)$ & $(j)$ & $(j)$ \\ \hline
$(j)$ & $(0)$ & $(0)$ & $(j)$ \\ \hline
$(k)$ & $(0)$ & $(0)$ & $(k)$ \\ \hline
\end{tabular}\]

    And the additive structure is given by the following lattice, with an arrow $a \to b$ meaning $a \le b$.
\[\begin{tikzcd}
	& {(0)} \\
	& {(j)} \\
	{(i)} && {(k)} \\
	& {(1)}
	\arrow[from=4-2, to=3-1]
	\arrow[from=4-2, to=3-3]
	\arrow[from=3-3, to=2-2]
	\arrow[from=3-1, to=2-2]
	\arrow[from=2-2, to=1-2]
\end{tikzcd}\]

    We note that this is indeed the structure one would expect when quotienting just the additive structure of $\Gamma_R$ from Example \ref{ex:SmallestNC} by the congruence generated by $x_r + 1 = 1$ for all $r \in R$. However this does not form a valuation as the map $r \mapsto (r)$ is not multiplicative: $i *_R k = 0$ but $(i) *_{I(R)} (k) = (j)$. The initial positive valuation of $R$ in this case is not $I(R)$ but $I(R)/\gen{(j) \sim 0}$.
\end{example}

A consequence of Corollary \ref{thm:I(R) initial} is that the complete idempotent semiring of ideals of a ring can be characterized up to isomorphism via the category of positive $R$-valuations whose codomain is complete. Many properties of the ideals of $R$ can be extracted from the structure of this semiring, even without reference to the underlying sets. For instance one can derive the collection of prime ideals of a ring purely from the semiring structure.

\begin{definition}
    Let $\Gamma$ be an idempotent semiring.

    We say $p \in \Gamma$ is a \textbf{prime element} if $ab \ge p$ implies $a \ge p$ or $b \ge p$.
\end{definition}

\begin{proposition}\label{thm:KrullsDef}
    $J \in I(R)$ is a prime element if and only if $J$ is a prime ideal.
\end{proposition}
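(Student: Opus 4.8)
The plan is to unwind both sides into set-theoretic language and observe that they become the same statement. First I would record the order dictionary for $I(R)$: since the semiring addition is ideal sum, the convention $A \le B \iff A + B = A$ becomes $A \le B \iff A \supseteq B$, so $A \ge J \iff A \subseteq J$, while the semiring product on $I(R)$ is the ordinary product of ideals. Thus ``$J$ is a prime element of $I(R)$'' unpacks to: for all ideals $A, B$, if $AB \subseteq J$ then $A \subseteq J$ or $B \subseteq J$ --- which is exactly the ideal-theoretic characterization of a prime ideal. So the content of the proposition is just the equivalence between the element-wise and the ideal-wise formulations of primeness.

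For the direction ``prime element $\Rightarrow$ prime ideal'' I would take $a, b \in R$ with $ab \in J$, pass to principal ideals to get $(ab) \subseteq J$, and invoke commutativity of $R$ to rewrite $(ab) = (a)(b) \subseteq J$; applying the prime-element hypothesis to the ideals $(a)$ and $(b)$ then yields $(a) \subseteq J$ or $(b) \subseteq J$, i.e.\ $a \in J$ or $b \in J$. This is the single place commutativity enters: it is the phenomenon flagged in the Conventions --- in a noncommutative ring $r \mapsto (r)$ need not be multiplicative (cf.\ the upper-triangular matrix ring of Example~\ref{ex:SmallestNC}, where $(i)(k) = (j) \ne (0) = (ik)$) --- and the argument genuinely needs it. For the converse I would argue contrapositively: given ideals $A, B$ with $A \not\subseteq J$ and $B \not\subseteq J$, choose $a \in A \setminus J$ and $b \in B \setminus J$; primeness of the ideal $J$ forces $ab \notin J$, while $ab \in AB$, so $AB \not\subseteq J$. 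This half is routine and uses no special hypotheses.

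The one point that needs care --- the main, if minor, obstacle --- is the unit ideal $R = 1_{I(R)}$, which is the least element of $I(R)$. With the definition of ``prime element'' exactly as stated, $R$ satisfies the defining condition vacuously, since $AB \subseteq R$ and $A \subseteq R$ hold for all ideals $A, B$; so $R$ would be counted as a prime element although it is not a prime ideal in the usual (proper) sense. I would resolve this by adopting the standard lattice/frame-theoretic convention that a prime element must also satisfy $p \ne 1_\Gamma$ --- equivalently, by excluding $R$ explicitly --- so that the correspondence is an exact biconditional with the set of proper prime ideals; with that bookkeeping in place the rest of the proof is the direct translation described above.
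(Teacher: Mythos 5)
Your proof is correct, but it is doing more work than the paper does: the paper supplies no argument at all, remarking only that the condition ``$ab \ge p$ implies $a \ge p$ or $b \ge p$'' translated through $A \le B \iff A \supseteq B$ is literally Krull's definition of a prime ideal (``$AB \subseteq P$ implies $A \subseteq P$ or $B \subseteq P$''), so the proposition is treated as a tautology. You instead take the element-wise definition ($ab \in J \Rightarrow a \in J$ or $b \in J$) as the meaning of ``prime ideal'' and prove the equivalence, which requires passing through principal ideals and invoking $(a)(b) = (ab)$ --- a step that genuinely uses commutativity, as you correctly note. Both readings are standard, and in the commutative setting of Section~\ref{sec:PostiveVal} they agree, so your route is a fine and somewhat more self-contained one. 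Your flag about the unit ideal $R = 1_{I(R)}$ is a legitimate bookkeeping point the paper elides: as stated, $1_{I(R)}$ vacuously satisfies the prime-element condition, whereas the usual convention excludes $R$ from the prime ideals. One either amends the definition of prime element to require $p \ne 1_\Gamma$ (the usual frame-theoretic convention), or one reads ``prime ideal'' here in Krull's original inclusive sense; either repair makes the biconditional exact, and the paper tacitly assumes one of them.
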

Proposition \ref{thm:KrullsDef} is exactly Krull's definition of a prime ideal for a possibly noncommutative ring.

\begin{definition}
    Let $\Gamma$ be an idempotent semiring. We say $F \subseteq \Gamma$ is a \textbf{filter} if it satisfies the following
    \begin{itemize}
        \item $F$ is nonempty.
        \item $F$ is upward closed: for each $x \in F$, $y \in \Gamma$ $x \le y$ implies that $y \in F$.
        \item $F$ is downward directed: for every $x, y \in F$, there is some $z \in F$ such that $x \ge z$ and $y \ge z$.
    \end{itemize}

    This third point in particular implies that for any $x, y \in F$ we have $x + y \in F$.

    A filter is \textbf{prime} if $xy \in F$ implies $x \in F$ or $y \in F$.

    A filter is \textbf{radical} if it can be written as an intersection of prime filters.

    A filter is \textbf{principal} if it is of the form $\overline{x} = \{y \in \Gamma : y \ge x\}$ for some $x \in \Gamma$.

    An filter is \textbf{maximal} if it is proper and there is no proper filter containing it. Maximal filters are also called \textbf{ultrafilters}.
\end{definition}

These definitions, with the exception of primality, are the standard order theoretic definitions. When our idempotent semiring is a distributive lattice -- meaning $ab = a \vee b$ -- our definition of a prime element and a prime filter are exactly the standard ones.

\begin{proposition}\label{thm:PrincipalPrimeisPrime}
    A principal filter $\overline{x}$ is prime if and only if $x$ is a prime element.
\end{proposition}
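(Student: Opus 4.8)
The plan is to show that the defining condition for $\overline{x}$ being a prime filter is, after translating membership in $\overline{x}$ into an order relation, word-for-word the defining condition for $x$ being a prime element; the only real content is confirming that $\overline{x}$ is a filter in the first place.

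First I would check that $\overline{x}=\{y\in\Gamma : y\ge x\}$ is a filter for every $x\in\Gamma$. It is nonempty because $x\le x$, so $x\in\overline{x}$. It is upward closed: if $y\in\overline{x}$ and $y\le z$ then $x\le y\le z$, so $z\in\overline{x}$. It is downward directed: given $y,z\in\overline{x}$, the element $x$ lies below both and lies in $\overline{x}$, so it serves as the required common lower bound. (This argument also justifies calling principal filters ``filters'' in the preceding definition.)

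Next, for the forward direction, I would assume $\overline{x}$ is prime and take $a,b\in\Gamma$ with $ab\ge x$; then $ab\in\overline{x}$, so primality of the filter yields $a\in\overline{x}$ or $b\in\overline{x}$, that is, $a\ge x$ or $b\ge x$, which is exactly the condition for $x$ to be a prime element. For the converse, I would assume $x$ is a prime element and take $ab\in\overline{x}$; unpacking this as $ab\ge x$ and applying primality of $x$ gives $a\ge x$ or $b\ge x$, i.e. $a\in\overline{x}$ or $b\in\overline{x}$, so $\overline{x}$ is a prime filter.

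Since both directions amount to the substitution $y\in\overline{x}\iff y\ge x$, I do not expect any genuine obstacle; the one point that deserves to be spelled out is the verification that $\overline{x}$ is a filter (in particular downward directed), and even there the witness is simply $x$ itself.
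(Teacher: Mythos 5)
Your proof is correct, and since the paper states Proposition~\ref{thm:PrincipalPrimeisPrime} without giving a proof, there is no conflicting argument to compare against: what you wrote is precisely the definitional unwinding that the authors evidently regarded as too immediate to spell out. You are right to flag the filter check as the only point with any content, and the witness $x$ itself for downward-directedness is the correct choice. The two directions are indeed the same substitution $y \in \overline{x} \iff y \ge x$ read in opposite directions, so nothing is missing.
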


\begin{proposition}
    $J \in I(R)$ is a radical ideal if and only if $\overline{J}$ is a radical filter.
\end{proposition}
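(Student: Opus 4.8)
The plan is to prove both implications by matching the order-theoretic decomposition of $\overline{J}$ with the ideal-theoretic radical, while keeping track of the fact that the semiring order on $I(R)$ reverses inclusion. First I would unwind Definition \ref{def:SemiringOrder}: $\overline{J} = \{K \in I(R) : K \ge J\} = \{K \in I(R) : K \subseteq J\}$, the set of ideals contained in $J$, and note that this is a filter (it is the principal filter on $J$). I would use the standard characterizations of a radical ideal: $J$ is radical iff $J = \bigcap_{\mathfrak p \supseteq J}\mathfrak p$ over the prime ideals containing $J$, iff $r^n \in J$ implies $r \in J$. Finally I would invoke Proposition \ref{thm:PrincipalPrimeisPrime} together with Proposition \ref{thm:KrullsDef}: for an ideal $\mathfrak p$, the principal filter $\overline{\mathfrak p}$ is a prime filter precisely when $\mathfrak p$ is a prime ideal.

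For the forward direction, assume $J$ is radical and write $J = \bigcap_{\mathfrak p}\mathfrak p$ over the prime ideals $\mathfrak p \supseteq J$ (if $J = R$ the family is empty, $\overline{R} = I(R)$, which is itself a prime filter and hence trivially radical). Since $K \subseteq J$ iff $K \subseteq \mathfrak p$ for every such $\mathfrak p$, this gives $\overline{J} = \bigcap_{\mathfrak p}\overline{\mathfrak p}$, an intersection of prime filters; hence $\overline{J}$ is radical.

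For the converse, assume $\overline{J} = \bigcap_{\alpha} F_\alpha$ with each $F_\alpha$ a prime filter, and take $r \in R$ with $r^n \in J$. Then $(r)^n = (r^n) \subseteq J$, so $(r)^n \in \overline{J} \subseteq F_\alpha$ for every $\alpha$. Applying the prime-filter property to $(r)^n = (r)\cdot(r)^{n-1}$ and inducting on $n$ yields $(r) \in F_\alpha$ for every $\alpha$, hence $(r) \in \bigcap_\alpha F_\alpha = \overline{J}$, i.e. $(r) \subseteq J$, i.e. $r \in J$. So $J$ is radical.

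I do not expect a serious obstacle; the content is mostly bookkeeping. The one place that warrants care is the converse: instead of trying to extract prime \emph{ideals} from the abstract prime \emph{filters} $F_\alpha$, it is much cleaner to push the single generator $(r)$ through each $F_\alpha$ using the ``root-closure'' built into the prime-filter condition, and then read off $r \in J$ directly from $(r) \in \overline{J}$. Beyond that, the only genuine risk is a sign error from the reversed order convention $K \le L \iff K \supseteq L$, so I would double-check the direction of each inclusion as I go.
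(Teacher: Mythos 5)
Your proof is correct and follows essentially the same route as the paper: for the forward direction you decompose $J$ as an intersection of primes and transfer it to $\overline{J} = \bigcap \overline{\mathfrak p}$, and for the converse you push $(r)^n \in \overline{J} \subseteq F_\alpha$ through the prime-filter condition (inductively) to get $(r) \in \overline{J}$ and hence $r \in J$. The only differences are cosmetic: you handle the $J = R$ edge case explicitly and spell out the induction that the paper leaves as a single line.
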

We note here that with Proposition \ref{thm:PrincipalPrimeisPrime}, the fact that the principal filter of radical ideal is radical may seem trivial, the converse is not necessarily simple. It is important to note that a prime filter is not necessarily a prime \textit{principal} filter, and so the statement that $\overline{J}$ is a radical filter does not directly state that $J$ is the intersection of prime ideals.
\begin{proof}
    Let $J = \sqrt{J}$ then we may write $J = \bigcap P_i$. We note that if $K \in I(R)$ is such that $K \subseteq J$ (i.e. $K \ge J$) then $K \subseteq P_i$ for all $i \in I$, and so $K$ is contained in the principal prime filter $\overline{P_i}$ for all $i$. Similarly if $K \in \overline{P_i}$ for all $i$ then $K \subseteq P_i$ for all $i$ and so $K \subseteq \bigcap P_i = J$, Thus we get $\overline{J} = \bigcap_{i \in I} \overline{P_i}$.

    Let $\overline{J} = \bigcap_{i \in I} F_i$ for $F_i$ prime filters. Then we note that $J \in F_i$ for all $i$, and for all $x \in J$ we have $(x) \in F_i$ for all $i$. Let $a^n \in J$, then $(a^n) = (a^{n-1})*(a) \in F_i$ for all $i$. As these are prime we get that $(a) \in F_i$ for all $i$, and so $(a) \in \overline{J}$. Thus we have $(a) \ge J$ or $(a) \subseteq J$, thus $a \in J$ and we know that $J$ is radical.
\end{proof}

Primality of elements and the above properties of filters are invariant under semiring isomorphisms, which means that we can reconstruct the prime spectrum and the Zariski topology of a commutative ring purely from the semiring resulting from the universal property given in Corollary \ref{thm:I(R) initial}.

\section{Positive, Multiplicatively Idempotent Valuations}\label{sec:FrameVal}

What if we wanted to assign a \textit{set} or some lattice valued structure to each element of a ring?

\begin{example}
    Let $R = k[x_1, ..., x_n]$ for $k$ an algebraically closed field.

    Consider the map:
    \[f \mapsto \{\vec{x} : f(\vec{x}) = 0_k\}\]

    This forms a positive, multiplicatively idempotent valuation into the semiring $\mathcal P(k^n)$, where for $X, Y \subseteq k^n$ we have:
    \begin{align*}
        X + Y &= X \cap Y,\\
        X * Y &= X \cup Y,\\
        X \le Y &\iff X \subseteq Y,\\
        0 &= k^n,\\
        1 &= \emptyset.
    \end{align*}

    The induced Galois correspondence $I(R) \vdash \mathcal P(k^n)$ is given by the maps below.
    \begin{align*}
        N : I(R) &\to \mathcal P(k^n)\\
        N: J &\mapsto \bigcap_{j \in J}\{\vec{x} : j(\vec{x}) = 0\}\\
        I: \mathcal P(k^n) &\to I(R)\\
        I: X &\mapsto \{f : f(\vec{x}) = 0 \, \forall x \in X\}\\
        ~\\
        IN(J) &= \sqrt{J}
    \end{align*}

    This is exactly the standard Galois correspondence in algebraic geometry between ideals and varieties. The statement that $IN(J) = \sqrt{J}$ is famously known as Hilbert's Nullstellensatz.
\end{example}

    We might ask ourselves if Hilbert's Nullstellensatz is a sort of universal result, or arises out of some universal result on positive multiplicatively idempotent valuations. We will show that this radical closure is the finest closure that can arise out of a positive multiplicatively idempotent valuation.

    \begin{definition}\label{def:MultIdempVal}
        Let $\nu: R \to \Gamma$ be a positive valuation. We say that $\nu$ is \textbf{multiplicatively idempotent}\index{Valuation!Arithmetic!Multiplicatively Idempotent} if $\Gamma$ is, that is, if for all $\gamma \in \Gamma$ we have
        \[\gamma *_{\Gamma} \gamma = \gamma.\]
    \end{definition}
    
    \begin{remark}
        A positive multiplicatively idempotent valuation is equivalent to a \textit{notion de z\'eros} introduced by Andr\'e Joyal in \cite{Joyal}. In this note he states that the prime spectrum of a ring factors all notions of zeroes. We will make this statement precise. An explanation of the consequences of this statement can be found in a Mathematics Stack Exchange answer by Zhen Lin Low, \cite{Zhen}.
    
        To make this statement precise we need to restrict to the case where $I(R)$ is initial, see Corollary \ref{thm:I(R) initial}. For this to hold we either consider slices over the category of \textit{$|R|$-complete} idempotent semirings or we assume that $R$ is Noetherian.
    \end{remark}

    \begin{remark}
        In the field of point-free topology, a complete, commutative, multiplicatively and additively idempotent semiring has a punchier name, it is a \textbf{coframe}. Coframes can be thought of as a generalization of closed topologies -- every closed topology is a coframe.
        
        Our positive, multiplicatively idempotent valuations can be seen as finding maps $\nu : R \to \Gamma$ from a commutative ring $R$ to a coframe or closed topology $\Gamma$ such that
        \begin{align*}
            \nu(0) &= \top,\\
            \nu(1) &= \bot,\\
            \nu(a + b) &\supseteq \nu(a) \wedge \nu(b),\\
            \nu(ab) &= \nu(a) \vee \nu(b).
        \end{align*}

        If $\Gamma$ is a topology $\top$ is the entire set and $\bot = \emptyset$.
    \end{remark}

    \begin{proposition}\label{thm:Closure contains radical}
        Let $R \overset{\nu}{\to} \Gamma$ be a positive, multiplicatively idempotent valuation. For any ideal $J \subseteq R$ we get:
        \[IN(J) \supseteq \sqrt{J}\]
    \end{proposition}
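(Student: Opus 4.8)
The plan is to unwind the definitions of $N$ and $I$ and exploit multiplicative idempotency directly. Recall that $N(J) = \sum_{x \in J}\nu(x)$ and $I(\gamma) = \{r \in R : \nu(r) \ge \gamma\}$, so that $IN(J) = \{r \in R : \nu(r) \ge N(J)\}$. Thus the statement $IN(J)\supseteq\sqrt{J}$ amounts to showing that for every $a \in \sqrt{J}$ we have $\nu(a) \ge N(J)$.

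First I would fix $a \in \sqrt{J}$ and choose $n \ge 1$ with $a^n \in J$. The key computation uses that $\nu$ is multiplicative and that $\Gamma$ is multiplicatively idempotent, so $\gamma^n = \gamma$ for all $\gamma \in \Gamma$ and all $n \ge 1$; hence
\[
\nu(a) = \nu(a)^n = \nu(a^n).
\]
Next I would observe that since $a^n \in J$, the term $\nu(a^n)$ is one of the summands of $N(J) = \sum_{x\in J}\nu(x)$, so by additive idempotency $N(J) + \nu(a^n) = N(J)$, i.e. $N(J) \le \nu(a^n)$ in the semiring order of Definition~\ref{def:SemiringOrder}. Combining the two displays gives $\nu(a) = \nu(a^n) \ge N(J)$, so $a \in I(N(J)) = IN(J)$. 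As $a \in \sqrt{J}$ was arbitrary, this yields $\sqrt{J}\subseteq IN(J)$, which is exactly $IN(J)\supseteq\sqrt{J}$.

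There is no serious obstacle here: the content is entirely the identity $\nu(a^n)=\nu(a)$ coming from multiplicative idempotency, together with the elementary fact that in an idempotent semiring a sum lies below each of its summands. The only points requiring care are bookkeeping ones — keeping the order convention straight (adding an element already present in a sum leaves it unchanged, so the sum is $\le$ that element), and noting that $N(J)$ is well-defined because, $\nu$ being a positive valuation, we are in the setting of Theorem~\ref{thm:GaloisConnection} where $\Gamma$ is assumed complete enough for the defining sum of $N$ to exist.
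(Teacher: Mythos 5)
Your proof is correct and rests on exactly the same key observation as the paper's: multiplicative idempotency of $\Gamma$ forces $\nu(a) = \nu(a^n)$, which is the whole engine. The paper's proof is organized slightly differently -- it shows that $I(\gamma)$ is a radical ideal for every $\gamma \in \Gamma$ (if $r^n \in I(\gamma)$ then $r \in I(\gamma)$), and then tacitly invokes the unit of the Galois connection, $J \subseteq IN(J)$, to conclude that $IN(J)$ is a radical ideal containing $J$ and hence contains $\sqrt{J}$. You instead unwind $N$ and $I$ and compute directly: for $a \in \sqrt{J}$ with $a^n \in J$, the term $\nu(a^n)$ is a summand of $N(J)$, so $N(J) \le \nu(a^n) = \nu(a)$ and $a \in IN(J)$. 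This is marginally more self-contained, since you do not rely on the closure property $J \subseteq IN(J)$ as a separate ingredient, but the mathematical content is the same.
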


    \begin{proof}
        As $\Gamma$ is multiplicatively idempotent we have that $\nu(f) = \nu(f^n)$ for all $n$. Then $\nu(f) \ge \gamma$ if and only if for any $n$ we have $\nu(f^n) \ge \gamma$.

        If $r^n \in I(\gamma)$, then $\nu(r^n) = \nu(r) \ge \gamma$ and so $r \in I(\gamma)$.
    \end{proof}

    For any commutative ring any positive multiplicatively idempotent valuation gives a closure operator on the ideals of that ring that is ``lower bounded by the Nullstellensatz". Our question is then, can we obtain a correspondence whose closed ideals are exactly the radical ideals? To do so we will classify the universal positive multiplicatively idempotent valuation semiring.

    In the same way that we can construct a universal valuation semiring, or a universal positive valuation semiring, we can construct a universal positive multiplicatively idempotent semiring.

    \begin{definition}
        The \textbf{universal positive multiplicatively idempotent valuation semiring}\index{Universal Positive Multiplicatively Idempotent Valuation}, $\Gamma^{id}_R$\index{$\Gamma^{id}_R$}, is the semiring
        \[\Gamma_R^+/\gen{x_r^2 = x_r} \cong \Gamma_R/\gen{x_r + 1 = 1, x_r^2 = x_r}\]
        equipped with the valuation
        \[r \mapsto [x_r].\]
    \end{definition}

    \begin{theorem}
        This valuation is initial among the positive, multiplicatively idempotent valuations of $R$.
    \end{theorem}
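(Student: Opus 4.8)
The plan is to prove this exactly as $\Gamma_R$ and $\Gamma_R^+$ were handled: $\Gamma_R^{id}$ is obtained from the universal valuation semiring $\Gamma_R$ by quotienting by the congruence $\gen{x_r+1=1,\ x_r^2=x_r}$, and these relations are precisely the syntactic encodings of ``positive'' and ``multiplicatively idempotent''. So I would split the argument into two halves. First, (i): check that the associated map $r\mapsto[x_r]$ is itself a positive, multiplicatively idempotent valuation, so that $\Gamma_R^{id}$ is an object of the relevant sub-slice of $R\downarrow\cat{ISR}$. Second, (ii): given an arbitrary positive multiplicatively idempotent $\nu:R\to\Gamma$, push the universal property of $\Gamma_R$ (Proposition 2.5.4 of \cite{2016}) through the two successive quotients to get a unique morphism of valuations $\Gamma_R^{id}\to\Gamma$.

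For (i): each valuation axiom (unitality, multiplicativity, superadditivity, invariance under negation) is an equation or an order inequality among elements of the codomain, hence is preserved under the quotient semiring homomorphism $q:\Gamma_R\twoheadrightarrow\Gamma_R^{id}$; since $r\mapsto[x_r]_{\Gamma_R}$ is a valuation, so is $q$ composed with it. Positivity is immediate from the imposed relation: $[x_r]+1=[x_r+1]=[1]$, so $[x_r]\ge 1$. The one nonformal point is that $\Gamma_R^{id}$ is multiplicatively idempotent \emph{as a whole semiring}, not merely on the generators. Here I would use that after the relations $x_ax_b=x_{ab}$, $x_0=0$, $x_1=1$ every element is a finite sum $\sum_i x_{a_i}$; then $\left(\sum_i x_{a_i}\right)^2=\sum_{i,j}x_{a_ia_j}$, the diagonal terms collapse via $x_{a_i}^2=x_{a_i^2}=x_{a_i}$, and each off-diagonal term is absorbed because positivity gives $x_{a_ia_j}+x_{a_i}=x_{a_i}(x_{a_j}+1)=x_{a_i}$, i.e.\ $x_{a_ia_j}\ge x_{a_i}$ and adding it to a sum already containing $x_{a_i}$ changes nothing. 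So the square is again $\sum_i x_{a_i}$.

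For (ii): by the universal property of $\Gamma_R$ there is a unique semiring homomorphism $\psi:\Gamma_R\to\Gamma$ with $\psi([x_r])=\nu(r)$. Positivity of $\nu$ gives $\psi(x_r+1)=\nu(r)+1_\Gamma=1_\Gamma=\psi(1)$, and multiplicative idempotence of $\Gamma$ gives $\psi(x_r^2)=\nu(r)^2=\nu(r)=\psi(x_r)$; hence $\psi$ is constant on each class of the congruence $\gen{x_r+1=1,\ x_r^2=x_r}$ and factors as $\psi=\bar\psi\circ q$ for a unique $\bar\psi:\Gamma_R^{id}\to\Gamma$, which satisfies $\bar\psi([x_r])=\nu(r)$ and is therefore a morphism of valuations. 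Uniqueness: $q$ is surjective, hence epic, so any morphism of valuations $\Gamma_R^{id}\to\Gamma$ precomposed with $q$ is a morphism of valuations out of $\Gamma_R$, and the latter is unique; so $\bar\psi$ is the only one.

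I expect the only genuinely non-bookkeeping step to be the global multiplicative idempotence of $\Gamma_R^{id}$ in part (i): this is exactly where positivity is used (so that a product dominates each of its factors and the cross terms in a square get swallowed), and it is the precise reason that imposing $x_r^2=x_r$ on the generators alone is enough to force idempotence everywhere. Everything in part (ii) is the same ``quotient by the defining relations'' argument already carried out for $\Gamma_R^+$.
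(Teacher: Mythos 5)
The paper states this theorem without proof (treating it, like the analogous statement for $\Gamma_R^+$, as routine), so there is no proof to compare against line by line; your proposal fills in the argument. Your proof is correct and your decomposition into (i) membership in the subcategory and (ii) the factoring property is exactly the right one.

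You are also right that (ii) is pure bookkeeping — the kernel congruence of $\psi$ contains the generating relations $x_r+1\sim 1$ and $x_r^2\sim x_r$ because $\nu$ is positive and $\Gamma$ is multiplicatively idempotent, hence contains the congruence they generate, so $\psi$ factors uniquely through $q$; and $q$ being surjective (epic) gives uniqueness — while the real content is in (i). The point you correctly flag as the only nontrivial step deserves emphasis: imposing $x_r^2=x_r$ only on the generators does not by itself make a semiring multiplicatively idempotent, and it is positivity that rescues this. Concretely, after the relations $x_ax_b=x_{ab}$ every element of $\Gamma_R^{id}$ is represented by a finite sum $\sum_i x_{a_i}$, and in
\[
\Bigl(\sum_i x_{a_i}\Bigr)^2=\sum_{i,j}x_{a_ia_j}
\]
the diagonal terms collapse by $x_{a_i}^2=x_{a_i}$, while each cross term satisfies $x_{a_ia_j}+x_{a_i}=x_{a_i}(x_{a_j}+1)=x_{a_i}$, i.e.\ $x_{a_ia_j}\ge x_{a_i}$, so it is absorbed into the sum that already contains $x_{a_i}$. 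That is precisely the interplay between the two imposed relations that makes $\Gamma_R^{id}$ land in the correct subcategory, and your write-up handles it cleanly. No gaps.
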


    \begin{remark}
        When $I(R)$ is initial among positive valuations our initial positive multiplicatively idempotent valuation semiring is the semiring $I(R) / \gen{(r)*(r) = (r)}.$
    \end{remark}

    \begin{theorem}\label{thm:Closed correspondence is radical}
        The closed ideals of the correspondence between $I(R)$ and $\Gamma^{id}_R$ are exactly the radical congruences\footnote{In the case where $R$ is not Noetherian, we look at the correspondence between $I(R)$ and the initial valuation among the valuations whose codomains are complete semirings.}.
    \end{theorem}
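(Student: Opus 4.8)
The plan is to show that the closure operator $J \mapsto IN(J)$ that the universal positive multiplicatively idempotent valuation $\nu^{id}\colon R \to \Gamma^{id}_R$ induces on $I(R)$ (with $\Gamma^{id}_R$ replaced by its $|R|$-completion when $R$ is not Noetherian, as in the footnote) is exactly the radical, so that its fixed points — the closed ideals of the correspondence — are precisely the radical ideals.

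One containment is immediate. Proposition \ref{thm:Closure contains radical} already gives $IN(J) \supseteq \sqrt{J}$ for every ideal $J$, so if $J = IN(J)$ then $J \supseteq \sqrt{J} \supseteq J$ and $J$ is radical. Thus every closed ideal is radical with no further work; the real content is the reverse, that every radical ideal is closed, i.e. $IN(J) \subseteq \sqrt{J}$.

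For this I would exhibit one positive multiplicatively idempotent valuation realising the radical closure exactly, namely $\tau\colon R \to T$, where $T$ is the coframe of Zariski‑closed subsets of $\operatorname{Spec}(R)$ — closed under arbitrary intersections ($=+_T$) and finite unions ($=*_T$), hence a complete idempotent semiring — and $\tau(r) = \{P \in \operatorname{Spec}(R) : r \in P\}$. One checks routinely that $\tau$ is a valuation with the (reversed) order convention on $T$: $\tau(0) = \operatorname{Spec}(R) = 0_T$, $\tau(1) = \tau(-1) = \emptyset = 1_T$, $\tau(ab) = \tau(a)\cup\tau(b)$ since primes are prime, and $\tau(a)\cap\tau(b) \subseteq \tau(a+b)$ since $a,b\in P$ forces $a+b\in P$; it is positive ($\tau(r)\supseteq\emptyset$) and multiplicatively idempotent ($\tau(r)\cup\tau(r)=\tau(r)$). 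For this valuation $N_T(J) = \bigcap_{x\in J}\tau((x)) = \{P : J \subseteq P\}$ and $I_T(\{P : J\subseteq P\}) = \{r : \{P : r\in P\} \supseteq \{P : J\subseteq P\}\} = \sqrt{J}$, the last step being the standard identity $\sqrt{J} = \bigcap_{J\subseteq P}P$, valid for every commutative ring. Hence $I_T N_T = \sqrt{\,\cdot\,}$. Now transport this along the universal property: since $\Gamma^{id}_R$ is initial among the (complete‑enough) positive multiplicatively idempotent valuations, $\tau$ factors as $\phi\circ\nu^{id}$ for a (complete) semiring homomorphism $\phi\colon \Gamma^{id}_R\to T$; because $\phi$ preserves the sums computing $N$ (a finite sum over a generating set when $R$ is Noetherian, all relevant sums once we work in the category of complete semirings otherwise) we get $\phi(N(J)) = N_T(J)$, and because $\phi$ is monotone, $\nu^{id}(r)\ge N(J)$ implies $\tau(r) = \phi(\nu^{id}(r)) \ge N_T(J)$, i.e. $IN(J) \subseteq I_T N_T(J) = \sqrt{J}$. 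Together with the first step this yields $IN(J) = \sqrt{J}$, so the closed ideals are exactly the radical ideals — equivalently, since congruences of a ring biject with ideals, the radical congruences.

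I expect the obstacles to be bookkeeping rather than conceptual: (i) checking that the Zariski‑closed sets genuinely form a complete semiring and that $\tau$ is superadditive with the correct order reversal; (ii) justifying $\phi(N(J)) = N_T(J)$ in the non‑Noetherian case, which is precisely where the passage to complete semirings and complete homomorphisms (cf. the universal‑completion remark and the theorem's footnote) is needed; and (iii) the identity $I_T(\{P : J\subseteq P\}) = \sqrt{J}$ in full generality. Alternatively, once Theorem \ref{thm:SpecisUniversal} is in hand one may shortcut the last two steps by identifying $\Gamma^{id}_R$ (in the complete setting) with $T$ directly and computing $IN = \sqrt{\,\cdot\,}$ on the nose.
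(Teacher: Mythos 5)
Your proof is correct and follows essentially the same route as the paper: both establish the containment $IN(J) \supseteq \sqrt{J}$ from Proposition~\ref{thm:Closure contains radical}, then obtain the reverse containment by factoring the Zariski-spectrum valuation $\tau\colon R \to T$ through the universal $\Gamma^{id}_R$ and transporting the known identity $I_T N_T = \sqrt{\,\cdot\,}$ along the induced homomorphism. The only cosmetic difference is that you argue $IN(J)\subseteq I_T N_T(J)$ directly from monotonicity of the factoring map, whereas the paper phrases the same inequality via preimages of up-sets; your parenthetical shortcut via Theorem~\ref{thm:SpecisUniversal} would also work, but note the paper places that theorem after this one, so taking the shortcut would require reordering.
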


    To prove this it will suffice to find a single valuation that distinguishes between radicals. We will do that first and then present the proof of this statement.

    \begin{example}\label{ex: Spec valuation}
        For a commutative ring $R$, we have a valuation $\tau : R \mapsto T$ where $T$ is the closed Zariski topology on $\operatorname{Spec}(R)$ with
        \begin{align*}
            A +_{T} B &= A \cap B,\\
            A *_{T} B &= A \cup B,\\
            0_{T} &= \operatorname{Spec}(R),\\
            1_{T} &= \emptyset.
        \end{align*}
        $T$ is a complete idempotent semiring.

        The associated valuation is the assignment of distinguished closed sets given below.
        \[\tau : f \mapsto D_f = \{P \text{ prime} : f \in P\}\]

        Our correspondence is given by the maps:
        \begin{align*}
            N: I(R) & \to T\\
            N: J &\mapsto \bigcap_{j \in J} D_j = \{P \text{ prime}: P \supseteq J\}\\
            I: T &\to I(R)\\
            I: X &\mapsto \bigcap_{P \in X}P
        \end{align*}

        We can see that $IN(J)$ is the intersection of prime ideals containing $J$, so $IN(J) = \sqrt{J}$.
    \end{example}

    We can now prove Theorem \ref{thm:Closed correspondence is radical}.
    \begin{proof}
        We want to show that under the induced Galois correspondence between $I(R)$ and $\Gamma^{id}_R$ we have that $IN(J) = \sqrt{J}$. We know from Proposition \ref{thm:Closure contains radical} that $IN(J) \supseteq \sqrt{J}$, so it remains to show that $IN(J) \subseteq \sqrt{J}$. To do so we will show that the closure given by the valuation $\tau$ gives an upper bound to the closure on the universal valuation.

        Let $\tau : R \to T$ be the valuation in Example \ref{ex: Spec valuation}, and let $f$ be the unique map from $\Gamma_{R}^{id}$ to $T$ that makes the following diagram commute:
\[\begin{tikzcd}
	& R \\
	{\Gamma^{id}_R} && T
	\arrow["f"{description}, from=2-1, to=2-3]
	\arrow["\nu"{description}, from=1-2, to=2-1]
	\arrow["\tau"{description}, from=1-2, to=2-3]
\end{tikzcd}\]
        Let $N_{\Gamma}$ be the correspondence map $I(R) \to \Gamma^{id}_R$ and $N_{T}$ be the correspondence map $I(R) \to T$. Similarly $I_{\Gamma}$ the map $\Gamma^{id}_R \to I(R)$ and $I_{T}$ the map $T \to I(R)$

        \[\begin{tikzcd}
        	& R \\
        	& {I(R)} \\
        	\\
        	{\Gamma^{id}_R} && T
        	\arrow[from=1-2, to=2-2]
        	\arrow["\nu"{description}, curve={height=18pt}, from=1-2, to=4-1]
        	\arrow["\tau"{description}, curve={height=-18pt}, from=1-2, to=4-3]
        	\arrow["{N_{\Gamma}}"{description}, from=2-2, to=4-1]
        	\arrow["{N_{T}}"{description}, from=2-2, to=4-3]
        	\arrow["f"{description}, from=4-1, to=4-3]
        \end{tikzcd}\]

        From the fact that $\tau = \nu \circ f$ and that $I_T(x) = \tau^{-1}(\{t \in T: t \ge x\}$ we get the following statements.
        \[I_{T}(x) = \tau^{-1}(\{t \in T : t \ge f(x)\}) = \nu^{-1}(f^{-1}(\{t \in T : t \ge f(x)))\]
        \[ \nu^{-1}(\{\gamma \in \Gamma^{id}_R : \gamma \ge x\}) = I_{\Gamma}(x)\]
        We note that $f^{-1}(\{t \in T : t \ge f(x)\}) \supseteq \{\gamma \in \Gamma^{id}_R : \gamma \ge x\}$ and so we get:
        \[I_T(x) = \nu^{-1}(f^{-1}(\{t \in T : t \ge f(x))) \supseteq \nu^{-1}(\{\gamma \in \Gamma^{id}_R : \gamma \ge x\}) = I_{\Gamma}(x).\]
        Put succinctly, $I_{T}(f(x)) \supseteq I_{\Gamma}(x)$

        We note that as our $N_{\bullet}$ maps are unique maps from an initial object we have that $N_T = fN_{\Gamma}$. Combined with the statement that $I_T(f(x)) \supseteq I_{\Gamma}(x)$ we get our upper bound.
        
        \[\sqrt{J} = IN_{T}(J) = I(f(N_{\Gamma}(J))) \supseteq IN_{\Gamma}(J)\]
        
        Together with Proposition \ref{thm:Closure contains radical} we get:
        \[\sqrt{J} \supseteq IN_{\Gamma}(J) \supseteq \sqrt{J}\]
        \[IN_{\Gamma}(J) = \sqrt{J}\]
    \end{proof}

    The valuation $\tau: R \to T$ can be used for more than bounding the closure operator on the universal valuation. We can show that it \textit{is} the universal valuation.
    \begin{theorem}\label{thm:SpecisUniversal}
        The initial valuation among appropriately complete, multiplicatively idempotent positive valuations is $\tau : R \to T$.
    \end{theorem}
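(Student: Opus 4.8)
The plan is to deduce this from the universal property of $\Gamma^{id}_R$ already recorded above: since the initial object of a slice category is unique up to isomorphism, it suffices to produce an isomorphism of idempotent semirings $\Gamma^{id}_R \cong T$ that identifies the valuation $r \mapsto [x_r]$ with $\tau\colon r \mapsto D_r$. Under the ``appropriately complete'' hypothesis (i.e.\ $R$ Noetherian, or slices taken over $|R|$-complete semirings), the remark following the definition of $\Gamma^{id}_R$ gives $\Gamma^{id}_R \cong I(R)/C$, where $C$ is the congruence (respectively, complete congruence) generated by the relations $(r)*(r) \sim (r)$, $r \in R$. So the whole statement reduces to identifying $I(R)/C$ with $T$.

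First I would check that the map $N_T\colon I(R) \to T$ of Example~\ref{ex: Spec valuation}, namely $J \mapsto V(J) := \{P \text{ prime} : P \supseteq J\}$, is a surjective (complete) semiring homomorphism: $V(I+J) = V(I)\cap V(J)$, $V(IJ) = V(I)\cup V(J)$, $V((0)) = \operatorname{Spec}(R)$, $V((1)) = \emptyset$, $V(\sum_j I_j) = \bigcap_j V(I_j)$, and every Zariski-closed set has the form $V(J)$. Consequently $I(R)/\ker N_T \cong T$, and this isomorphism is compatible with the valuations because $N_T(\iota(r)) = V((r)) = D_r = \tau(r)$. Moreover $\ker N_T = \{(I,J) : \sqrt{I} = \sqrt{J}\}$, since $V(I) = V(J)$ iff $\sqrt I = \sqrt J$.

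The heart of the proof is then to show $C = \ker N_T$. The inclusion $C \subseteq \ker N_T$ is immediate from $\sqrt{(r^2)} = \sqrt{(r)}$. For the reverse inclusion I would prove that every ideal is $C$-equivalent to its radical. Since $(r)*(r) = (r^2)$ in $I(R)$, the generating relations give $(a) \equiv_C (a^2) \equiv_C (a^4) \equiv_C \cdots$, hence $(a) \equiv_C (a^{2^k})$ for all $k$; for $a \in \sqrt J$ pick $k$ with $a^{2^k}\in J$, so $(a^{2^k}) \subseteq J$. Writing $\sqrt J = \sum_{a \in \sqrt J}(a)$ — a finite sum if $R$ is Noetherian, an $|R|$-indexed sum otherwise, which $C$ respects because $N_T$ is a complete homomorphism — and summing these equivalences gives $\sqrt J \equiv_C K$ for some ideal $K$ with $K \subseteq J$. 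Since $K \subseteq J \subseteq \sqrt J$, adding $J$ (the $+$ of $I(R)$) to both sides of $\sqrt J \equiv_C K$ and using $\sqrt J + J = \sqrt J$ and $K + J = J$ yields $\sqrt J \equiv_C J$. Therefore $\sqrt I = \sqrt J$ implies $I \equiv_C \sqrt I = \sqrt J \equiv_C J$, i.e.\ $\ker N_T \subseteq C$.

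Combining, $\Gamma^{id}_R \cong I(R)/C = I(R)/\ker N_T \cong T$, with $[x_r] = [(r)] \mapsto V((r)) = \tau(r)$, so $\tau\colon R \to T$ is isomorphic in $R \downarrow \cat{ISR}$ to the initial positive multiplicatively idempotent valuation and is hence itself initial. I expect the main obstacle to be the non-Noetherian, $|R|$-complete case: one must verify that $N_T$ is a homomorphism of complete semirings (so that $\ker N_T$ is a complete congruence), that the defining relations of the complete congruence $C$ really do reduce every ideal to its radical via the infinite sum above, and that the ``sandwich'' step invokes only closure of a congruence under the semiring operations (which it does, $+$ in $I(R)$ being ideal sum and $I \le J$ there meaning $I \supseteq J$). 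In the Noetherian case every sum in sight is finite and these worries disappear.
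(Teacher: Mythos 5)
Your proposal is correct and follows essentially the same route as the paper: both reduce to showing that the factored map $I(R)/\gen{x^2 \sim x} \to T$ is an isomorphism by proving $J$ is congruent to $\sqrt J$ via $(r) \sim (r)^n$. The only cosmetic difference is that the paper chooses exponents $n_r$ so that $\sum_{r\in\sqrt J}(r^{n_r})$ equals $J$ outright (taking $n_r=1$ for $r\in J$), whereas you allow powers of two and recover equality by the ``sandwich'' step $\sqrt J + J = \sqrt J$, $K + J = J$; both are valid.
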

    \begin{proof}
        We note that by definition $N: I(R) \to T$ is surjective, as each element in $T$ is a Zariski closed set, and so is of the form
        \[N(J) = \{P \text{ prime} : P \supseteq J\}\]
        for some ideal $J$. We also note that the map $N$ factors through $I(R)/\gen{x^2 \sim x}$, as $T$ is multiplicatively idempotent.

        We then need to show that if $N(J_1) = N(J_2)$ then $J_1 \sim J_2$ under the equivalence relation $x^2 \sim x$, and we then get that the factored map $I(R)/\gen{x^2 \sim x} \to T$ is an isomorphism of appropriately complete multiplicatively and additively idempotent semirings.

        We first note that if $N(J_1) = N(J_2)$ then $\sqrt{J_1} = \sqrt{J_2}$. 
        
        Under the equivalence relation induced by $x^2 \sim x$ we have that $\sqrt{J} \sim J$.
        \begin{align*}
            \sqrt{J} = \sum_{\{r : r^n \in J\}}(r) \sim \sum_{\{r : r^n \in J\}}(r)^n = \sum_{r \in J}(r) = J\\
        \end{align*}
        If $N(J_1) = N(J_2)$ then $J_1 \sim \sqrt{J_1} = \sqrt{J_2} \sim J_2$ under the congruence generated by $x^2 \sim x$, and so the map $I(R)/\gen{x^2 \sim x} \to T$ is injective.

        Thus we get that the factored map $\Gamma^{id}_R \cong I(R)/\gen{x^2 = x} \to T$ is both injective and surjective, and so it is an isomorphism.
    \end{proof}

    \begin{remark}
        Any assignment of lattice or set to elements of $R$ that ``looks like" assigning zeros to polynomials, ``looks like" you are taking a quotient of the coframe $\operatorname{Spec}(R)$, as that assignment factors through $\operatorname{Spec}(R)$. While the zero locus itself is not necessarily the universal valuation in this setting, we have that the Zariski topology on $\operatorname{Spec}(R)$ is.
    \end{remark}

    \begin{corollary}
        Let $D: R \to \operatorname{Spec}(R)$ be the map:
        \[r \mapsto \{\mathfrak p : r \in \mathfrak p\}\]
    
        As a distributive, inf-complete lattice (i.e. a coframe), the closed topology $\operatorname{Spec}(R)$ with singleton closure given by $D$ is defined up to isomorphism by the universal property:

        For any positive, multiplicatively idempotent valuation $\nu: R \to \Gamma$, there exists a unique map $\hat{\nu}: \operatorname{Spec}(R) \to \Gamma$ such that $\nu = \hat{\nu} \circ D$.
    \end{corollary}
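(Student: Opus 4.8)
The plan is to read this corollary as a restatement of Theorem~\ref{thm:SpecisUniversal} in universal-property language, so the work is almost entirely a matter of unwinding ``initial object'' into the stated ``there exists a unique $\hat\nu$'' form. First I would identify the data: the map $D\colon r \mapsto \{\mathfrak p : r \in \mathfrak p\}$ is literally the valuation $\tau$ of Example~\ref{ex: Spec valuation} (written there as $\tau(f) = D_f$), and $\operatorname{Spec}(R)$ ``as a distributive, inf-complete lattice'' is precisely the coframe $T$ of closed sets with $+_T = \cap$, $*_T = \cup$, $0_T = \operatorname{Spec}(R)$, $1_T = \emptyset$. Thus the corollary asserts that $\tau\colon R \to T$ is an initial object in the category whose objects are positive, multiplicatively idempotent valuations out of $R$ and whose morphisms are the (appropriately complete) coframe homomorphisms commuting with the valuations — which is exactly the statement of Theorem~\ref{thm:SpecisUniversal}, together with the isomorphism $T \cong \Gamma^{id}_R$ proved there and the fact that $\Gamma^{id}_R$ is initial among such valuations.

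For existence of $\hat\nu$, given a positive multiplicatively idempotent valuation $\nu\colon R \to \Gamma$, I would take the unique homomorphism $\Gamma^{id}_R \to \Gamma$ factoring $\nu$ and precompose with the inverse of the isomorphism $T \xrightarrow{\ \sim\ }\Gamma^{id}_R$ of Theorem~\ref{thm:SpecisUniversal}, obtaining $\hat\nu\colon T \to \Gamma$ with $\hat\nu\circ D = \nu$; equivalently one can exhibit $\hat\nu$ directly on the Galois side as $\hat\nu = N_\Gamma \circ I_T$, or as $\hat\nu\bigl(\bigcap_{j\in J}D_j\bigr) = \sum_{r\in J}\nu(r)$, and check it is a coframe homomorphism using the additive-homomorphism argument already used for $N$ (superadditivity together with subsums being larger in the idempotent order). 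For uniqueness I would use that the distinguished closed sets generate $T$: every element of $T$ has the form $N(J) = \bigcap_{j\in J}D_j$ for some ideal $J$, this intersection is exactly the sum $\sum_{j\in J}D_j$ in the coframe order on $T$, and a coframe homomorphism preserves such sums, so any $\hat\nu$ with $\hat\nu(D_j) = \nu(j)$ is forced to satisfy $\hat\nu(N(J)) = \sum_{j\in J}\nu(j)$. When $R$ is Noetherian these generating intersections are finite, so a plain semiring homomorphism suffices and no completeness hypothesis on $\Gamma$ is needed; in general one needs $\Gamma$ closed under sums of size $|R|$ or less, matching the ``appropriately complete'' proviso of Corollary~\ref{thm:I(R) initial}.

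The clause ``defined up to isomorphism by the universal property'' is then the usual uniqueness of initial objects: if $(C,D')$ is another coframe with a positive multiplicatively idempotent valuation satisfying the same universal property, the factoring maps $T \to C$ and $C \to T$ compose in both orders to coframe endomorphisms which, by applying the uniqueness clauses of the universal properties to $\tau$ and $D'$ respectively, must be the identities, so $T \cong C$ compatibly with the valuations.

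I expect the main obstacle to be bookkeeping rather than conceptual: fixing precisely the category of ``coframes with a valuation'' so that $\hat\nu$ is simultaneously guaranteed to exist and pinned down uniquely — in particular being careful that the admissible homomorphisms preserve exactly the infinitary sums (intersections) $N(J) = \bigcap_{j\in J}D_j$ that generate $T$, and threading the non-Noetherian case through the same completeness restriction as in Theorem~\ref{thm:SpecisUniversal} and Proposition~\ref{thm:Closure contains radical}. There is no new mathematics beyond Theorem~\ref{thm:SpecisUniversal}; the content is the translation plus the observation that the $D_f$ generate $T$ under (finite, resp.\ $|R|$-small) intersection.
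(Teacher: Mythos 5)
Your proposal is correct and is precisely the unwinding the paper intends: the corollary is stated without its own proof because it is the initial-object characterization of Theorem~\ref{thm:SpecisUniversal} translated into the ``unique factoring map'' form, with $D = \tau$ and $T \cong \Gamma^{id}_R$ doing all the work. Your supplementary observations (the generating set $\{D_f\}$ pinning down uniqueness, the completeness bookkeeping matching Corollary~\ref{thm:I(R) initial}, and the standard initial-object-uniqueness argument for ``up to isomorphism'') are exactly the implicit details the paper leaves to the reader.
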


    We note that this universal property is the definition of the frame theoretic spectrum of a ring given in section V.3.1 of \cite{StoneSpaces}. Through the generalized definition of a valuation given by Jeffrey and Noah Giansiracusa in \cite{2016} we can link this frame theoretic universal property directly to the field of arithmetic valuation theory.

    \begin{figure}[]
\begin{tabular}{|l|l|l|l|}
\hline
                    & Valuations      & \begin{tabular}[c]{@{}l@{}}Positive (Complete)\\ Valuations\end{tabular} & \begin{tabular}[c]{@{}l@{}}Multiplicatively Idempotent\\ Positive (Complete) \\ Valuations\end{tabular} \\ \hline
Universal Valuation & $\Z$-submodules & The semiring of ideals                                                  & \begin{tabular}[c]{@{}l@{}}The coframe of the Zariski topology\\ on the prime spectrum\end{tabular}                   \\ \hline
\end{tabular}
\caption{Initial valuations of a ring $R$, if $R$ is non-Noetherian then the positive valuations are over \textit{complete} semirings.}
\end{figure}

\printbibliography

\end{document}